\documentclass[11pt,a4paper]{amsart}

\usepackage{xspace}
\theoremstyle{plain}
\usepackage{amsbsy,amssymb,amsfonts,amstext,latexsym}
\usepackage{mathtools}
\usepackage[utf8]{inputenc}
\usepackage{xcolor}

\usepackage{hyperref}

\usepackage{pgf,tikz}
\usepackage{mathrsfs}
\usetikzlibrary{arrows}

\marginparwidth=10 true mm
\oddsidemargin=0 true mm
\evensidemargin=0 true mm
\marginparsep=5 true mm
\topmargin=0 true mm
\headheight=8 true mm
\headsep=4 true mm
\topskip=0 true mm
\footskip=15 true mm

\setlength{\textwidth}{150 true mm}
\setlength{\textheight}{220 true mm}
\setlength{\hoffset}{8 true mm}
\setlength{\voffset}{2 true mm}

\parindent=0 true mm

\usepackage{enumerate}
\usepackage{graphics}
\usepackage{subcaption}

\date{\today}
\title{Common hypercyclic algebras for families of products of backward shifts}
\author{Fernando Costa Jr.}
\address{
Université d’Avignon et des Pays de Vaucluse, Laboratoire de Mathématiques, Campus Jean-Henri Fabre, 301, rue Baruch de Spinoza, BP 21239, 84 916 Avignon Cedex 9 France
}
\thanks{The author was partially supported by the grant ANR-17-CE40-0021 of the French National Research Agency ANR (project Front)\\
Email: \href{mailto:fernando.vieira-costa-junior@univ-avignon.fr}{fernando.vieira-costa-junior@univ-avignon.fr} \\
Permanent email: \href{mailto:fv.costajunior@gmail.com}{fv.costajunior@gmail.com}}

\subjclass{47A16}

\keywords{Common hypercyclicity, weighted shifts, hypercyclic algebras}

\newcommand{\veps}{\varepsilon}

\def\RR{\mathbb R}

\def\NN{\mathbb N}

\def\CC{\mathbb C}

\def\D{\mathcal D}

\newtheorem{theorem}{Theorem}[section]

\newtheorem{lemma}[theorem]{Lemma}

\newtheorem{proposition}[theorem]{Proposition}

\newtheorem{corollary}[theorem]{Corollary}

{\theoremstyle{definition}}
{\theoremstyle{definition}}

{\theoremstyle{definition}\newtheorem{exa}[theorem]{Example}}

{\theoremstyle{definition}}

{\theoremstyle{definition}}

{\theoremstyle{definition}}

\newtheorem{question}[theorem]{Question}





\allowdisplaybreaks

\begin{document}

\definecolor{zzttqq}{rgb}{0.6,0.2,0}
\definecolor{zzttqqa}{rgb}{0.2,0.6,0}
\definecolor{cqcqcq}{rgb}{0.7529411764705882,0.7529411764705882,0.7529411764705882}

\maketitle

\begin{abstract}
In this paper, we generalize to the context of algebras some recent results 
on the existence of common hypercyclic vectors for families of products of backward shift operators. We also give, in a multi-dimensional setting, a positive answer to a question raised by F. Bayart, D. Papathanasiou and the author about the existence of a common hypercyclic algebra on $\ell_1(\NN)$ with the convolution product for the family of backward shifts $(B_{w(\lambda)})_{\lambda>0}$ induced by the weights $w_n(\lambda)=1+\lambda/n$.
\end{abstract}

\section{Introduction}

The study of hypercyclicity consists of analysing the behaviour of the iterates of an operator $T$ on a vector $x$ in a topological vector space $X$. The set of iterates is called the orbit of $x$ under $T$ and denoted by $Orb(T;x):=\{T^n(x) : n\geq 1\}$. When the latter is dense in $X$, we say that $T$ is a \emph{hypercyclic operator} and that $x$ is one of its \emph{hypercyclic vectors}. The set of hypercyclic vectors of $T$ is denoted by $HC(T)$ (see \cite{BM09} or \cite{GePeBook} to learn more on the general theory of linear dynamical systems). When we are dealing with a family $(T_\lambda)_{\lambda\in\Lambda}$ of hypercyclic operators acting on the same space $X$, it is natural to ask whether there is a single vector $x\in X$ which is hypercyclic for each member $T_\lambda$ of the family. We usually assume that the parameter set $\Lambda$ is $\sigma$-compact and that $(\lambda,x)\mapsto T_\lambda(x)$ is continuous from $\Lambda\times X$ into $X$. Such a vector is called \emph{common hypercyclic vector} for the continuous family $(T_\lambda)_{\lambda\in\Lambda}$, which in turn is called \emph{common hypercyclic family} of operators.

This question is interesting and non-trivial. On the one hand, many classical family of operators are common hypercyclic, for example $(\lambda B)_{\lambda>1}$ on $\ell_p(\NN)$, $p\in[1,+\infty)$, where $B$ is the backward shift, or $(T_a)_{a\neq 0}$ on $H(\CC)$, where $T_a:f\mapsto f(\cdot + a)$ is the operator of translation by $a\neq 0$. On the other hand, an example due to A. Borichev (quoted in \cite{AG}) show that the family $(sB\times tB)_{(s,t)\in \Lambda}$ does not have common hypercyclic vectors on $\ell_2(\NN)\times \ell_2(\NN)$ whenever $\Lambda$ has positive Lebesgue measure. Of course, the smaller the parameter set is, the better is the chance of finding a common hypercyclic vector. 

When $X$ has a structure of algebra, it is natural to ask whether or not $HC(T)\cup \{0\}$ contains a non-trivial subalgebra of $X$, what we shall name as \emph{hypercyclic algebra} for $T$. Although the subject is not new, appearing with a negative result in \cite{ACPS1,ACPS2} and with a positive result in \cite{shk, BM09}, it is continuously catching the attention of many researchers on the field in the last few years. 
In this paper, we are exclusively interested in the case where $X$ is a Fréchet sequence algebra, that is, we will assume that $X$ is a Fréchet subspace of the space of all sequences $\omega=\CC^{\NN}$ and on which a well defined product $\cdot$ satisfies, for all $x,y\in X$ and $q\geq 1$, \[\|x\cdot y\|_q\leq \|x\|_q\times \|y\|_q,\]
where the non-decreasing separating sequence of seminorms $(\|\cdot\|_q)_{q\geq 1}$ induces the complete topology of $X$. We are mainly interested on families of \emph{weighted backward shift} operators, that is, maps $B_w:X\to X$ defined by \[B_w(x_0,x_1,x_2,...)=(w_1x_1,w_2x_2,...), \quad x\in X,\]
where the so called \emph{weight sequence} $w=(w_n)_n$ is a sequence of positive real numbers. We say that $w$ is \emph{admissible} when $B_w$ is continuous.

Two classical products are commonly considered on sequence algebras: the coordinatewise product and the convolution (or Cauchy) product, respectively defined, for any sequences $(a_n)_{n\geq 0}, (b_n)_{n\geq 0}$ in $\omega$, as \[(a_n)_n\cdot (b_n)_n = (a_n b_n)_n\quad\quad\text{and}\quad\quad (a_n)_n\cdot (b_n)_n = \Big(\sum_{k=0}^n a_k b_{n-k}\Big)_n.\] The first one turns $\ell_p(\NN), p\geq 1,$ and $c_0(\NN)$ into Fréchet sequence algebras and the second one does the same with $\ell_1(\NN)$. These products also make $H(\CC)$ a Fréchet sequence algebra when we endow it with the sequence of seminorms $(\|\cdot\|_q)_{q\geq 1}$ given by 
\[\left\|\sum_{n=0}^\infty a_n z^n\right\|_q=\sum_{n=0}^\infty |a_n|q^n.\]

A powerful tool to find common hypercyclic vectors is the well known Costakis-Sambarino criterion (see \cite{CoSa04a}). Its hypothesis give a clear way of constructing the partition of the parameter set required to apply the Basic Criterion for common hypercyclicity \cite[Lemma 7.12]{BM09} (or Theorem \ref{basic:alg} for a version adapted to algebras). On higher dimensions, on the other hand, the construction of the partition is more delicate as there is no trivial way of ordering the sets in the partition. While in one dimension, one can arrange the partition in a way that inequalities such as $\lambda<\lambda'$ can be used to get rid of otherwise problematic factors (see the proof of \cite[Theorem 3.12]{BCP2}), in two dimensions one cannot expect to do anything similar with two parameters $(\lambda,\mu)$ and $(\lambda',\mu')$. Hence, proximity conditions on the elements of the partitions become essential. In this paper, we make use of the ideas in the recent work \cite{BCM21} to obtain similar results for common hypercyclic algebras for families of product of backward shift operators. Let us summarize our main findings.

\subsection{Coordinatewise product}

In the same vein as the results from \cite{BCM21}, we have obtained a more general statement which includes both the following corollaries.

\begin{corollary}\label{corol:prac:1}
Let $d\geq 1$, $X=\ell_p(\NN)$ or $c_0(\NN)$, $\alpha\in(0, 1/d)$, $I\subset (0,+\infty)$ be $\sigma$-compact and $(w(\lambda))_{\lambda\in I}$ be a continuous family of weights. Assume that, for all $I_0\subset I$ compact, there exist $D_1,D_2,D_3>0$ and $N\geq 0$ such that, for all $n\geq N$,
\begin{itemize}
\item $a\in I_0\mapsto \sum_{i=1}^n\log\big(w_i(a)\big)$ is $D_1n^\alpha$-Lipschitz;
\item $\inf_{a\in I_0} w_1(a)\cdots w_n(a)\geq D_2\exp(D_3 n^\alpha)$.
\end{itemize}
Then $\big(B_{w(\lambda_1)}\times \cdots \times B_{w(\lambda_d)}\big)_{\lambda\in I^d}$ admits a common hypercyclic algebra for the coordinatewise product.
\end{corollary}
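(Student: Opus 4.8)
The plan is to deduce Corollary~\ref{corol:prac:1} from the more general theorem about common hypercyclic algebras that, per the introduction, subsumes it. The conceptual backbone is the algebra-adapted Basic Criterion (Theorem~\ref{basic:alg}): one must produce, for every compact $K\subset I^d$, every finite family of nonzero polynomials $P_1,\dots,P_r$ in $d'$ variables with $P_j(0)=0$, and every collection of target vectors and tolerances, a single vector $x$ and, for each $(\lambda,\mu,\dots)$ in the partition cell, an iterate $N$ such that $\big(B_{w(\lambda_1)}\times\cdots\times B_{w(\lambda_d)}\big)^N$ applied to $P_j(x)$ approximates the prescribed target, while $x$ itself stays close to a prescribed vector. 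The core analytic input is the pair of hypotheses: the Lipschitz control of $a\mapsto\sum_{i\le n}\log w_i(a)$ with constant $D_1 n^\alpha$, and the lower bound $w_1(a)\cdots w_n(a)\ge D_2\exp(D_3 n^\alpha)$ on the partial products. The first is what lets proximity of parameters inside a small partition cell translate into closeness of the corresponding products $w_1(a)\cdots w_n(a)$; the second guarantees the products grow fast enough (at rate $n^\alpha$ in the log scale) to both realize large targets and to kill the unwanted cross terms coming from powers $x^k$ with $k\ge 2$.

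\smallskip

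First I would fix a compact $I_0\subset I$ with $K\subset I_0^d$ and extract the constants $D_1,D_2,D_3,N$ from the hypotheses. Then I would set up a grid: partition $I_0$ into $M$ subintervals of length $\sim \delta$, take the product partition of $I_0^d$, and for the cell indexed by a $d$-tuple of centres use a single block of coordinates of $x$ located around an index $n$ chosen large (depending on the cell, increasing as we enumerate cells) so that on that block the product $w_1(a)\cdots w_n(a)$ is, up to the Lipschitz error $e^{D_1\delta n^\alpha}$, essentially constant across the cell. The candidate $x$ is built as a sum $\sum x^{(s)}$ of finitely supported blocks, one per cell, with the block for cell $s$ placed at coordinates large enough to be annihilated by all the shift-iterates associated with earlier cells and to be moved, by the iterate associated with cell $s$, onto the support of the target. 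Because the product is coordinatewise, $P_j(x)$ on a given block reduces to evaluating $P_j$ coordinatewise, and the dominant monomial after applying $B_w^N$ is controlled by the partial product raised to the degree; the hypotheses let us both make the linear part large (to hit the target) and make every strictly higher-degree contribution negligible. A diagonalisation over the enumerated cells, with $\delta\to 0$ and the grid getting finer, produces the required common vector.

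\smallskip

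The step I expect to be the main obstacle is precisely the multidimensional bookkeeping flagged in the introduction: with $d$ parameters there is no linear order on the cells, so one cannot use an inequality like $\lambda<\lambda'$ to discard a troublesome factor as in the one-dimensional arguments of \cite{BCP2}. Instead one must rely entirely on the proximity of parameters within a cell, quantified through the $D_1 n^\alpha$-Lipschitz hypothesis, and choose the cell diameters $\delta$ shrinking fast enough relative to the (growing) indices $n$ that $e^{D_1\delta n^\alpha}$ stays bounded — this balancing of $\delta$ against $n^\alpha$, uniformly over all $d$ coordinates and over all the polynomials $P_j$ simultaneously, is the delicate quantitative heart of the construction. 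The constraint $\alpha\in(0,1/d)$ is exactly what makes the balance feasible: it leaves enough room, across $d$ independent directions each contributing an $n^\alpha$-type error, for the fast growth $D_2\exp(D_3 n^\alpha)$ of the partial products to still dominate. Once the general theorem is in place, verifying that the two displayed hypotheses of Corollary~\ref{corol:prac:1} are the concrete instances of its abstract assumptions, and that the coordinatewise product on $\ell_p(\NN)$ or $c_0(\NN)$ satisfies the Fréchet-sequence-algebra requirement $\|x\cdot y\|_q\le\|x\|_q\|y\|_q$, is routine.
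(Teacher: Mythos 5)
Your high-level architecture (reduce to an algebra version of the Basic Criterion, partition the compact set of parameters into small cells, attach to each cell an iterate $n_j$ and a finitely supported block, then sum the blocks) is indeed the architecture behind the paper's result. But the paper's actual proof of Corollary~\ref{corol:prac:1} is short and purely computational: it verifies hypotheses (i)--(iii) of the practical criterion, Theorem~\ref{thm:unifcrit}, with $F(n)=D_1n^\alpha$, $C_1=D_1$, $C_2<D_3/D_1$, $\beta>\alpha d$ and $N_0$ large, all the geometric work being already encapsulated in Theorem~\ref{thm:unifcrit} and the covering Lemma~\ref{lem:coveringF} imported from \cite{BCM21}. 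Your last sentence (``once the general theorem is in place\dots is routine'') gestures at this, but your proposal takes on the burden of building the partition and the common vector itself, and that is exactly where it has a genuine gap.

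The gap is this: you only control parameter proximity \emph{inside} a cell (the balance of the cell diameter $\delta$ against $e^{D_1\delta n^\alpha}$) and you dispose of blocks attached to \emph{earlier} cells by annihilation. But when you apply the iterate $N_i$ attached to cell $i$, the blocks built for \emph{later} cells $j>i$ are not annihilated: they are shifted to positions $n_j-n_i+l$ and their images must be shown to be small in norm, uniformly for $\lambda$ in cell $i$. For these cross terms the Lipschitz hypothesis must be applied to $|x-x_j|$ with $x\in\Lambda_i$ and $x_j$ the tag of a \emph{different} cell, so one needs a covering in which the distance between any two cells is dominated by $D(n_j-n_i)^\alpha/n_j^\alpha$ (condition (c) of Lemma~\ref{lem:coveringF}), together with the summability conditions $\sum_j n_j^{-\beta}\le\eta$ and $\sum_{j\neq i}|n_j-n_i|^{-\beta}\le\eta$ for some $\beta>\alpha d$; only then does the lower bound $D_2\exp(D_3 n^\alpha)$ beat the error $\exp\big(C_2 F(n_j)(n_j-n_i)^\alpha/n_j^\alpha\big)$ coordinatewise, as in the chain of estimates in the proof of Theorem~\ref{thm:unifcrit}. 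Constructing such an ordering of the roughly $n^{\alpha d}$ cubes of side $\tau/n_j^\alpha$ in dimension $d\ge 2$, where no linear order of parameters is available, is the non-routine heart of the argument, and your sketch neither supplies it nor identifies it (within-cell proximity alone cannot do the job). Relatedly, your explanation of the constraint $\alpha\in(0,1/d)$ is off: it is not that ``$d$ directions each contribute an $n^\alpha$ error,'' but a counting constraint --- covering $K$ requires about $n^{\alpha d}$ cubes, each consuming a distinct integer $n_j\lesssim n$ with gaps at least $N$, which is only possible when $\alpha d<1$ (and it leaves room for $\beta>\alpha d$ in the summability conditions). A minor further point: the criterion you quote, with arbitrary polynomials $P_j$ vanishing at $0$, is not the one used here; for the coordinatewise product the paper's Theorem~\ref{basic:alg} works with powers $v^{1/m'}$ and $m\in[m',m'']$ only, and this is what makes the verification for $\ell_p$ and $c_0$ tractable.
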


\begin{corollary}\label{corol:prac:2}
Let $d\geq 1$, $X = \ell_p(\NN)$ or $c_0(\NN)$, $I\subset (0,+\infty)$ be $\sigma$-compact and $(w(\lambda))_{\lambda\in I}$ be a continuous family of weights. Assume that, for all $I_0\subset I$ compact, there exist $D_1,D_2,\gamma > 0$ and $N \geq 0$ such that, for all $n \geq N$,
\begin{itemize}
    \item $a\in I_0\mapsto \sum_{i=1}^n \log(w_i(a))$ is $D_1 \log(n)$-Lipschitz;
    \item $\inf_{a\in I_0}w_1(a)\cdots w_n(a)\geq D_2 n^\gamma.$
\end{itemize}
Then $\big(B_{w(\lambda_1)}\times \cdots \times B_{w(\lambda_d)}\big)_{\lambda\in I^d}$ admits a common hypercyclic algebra for the coordinatewise product.
\end{corollary}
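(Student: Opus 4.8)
The plan is to derive this corollary from the general theorem that the author presumably proves earlier (of which Corollaries \ref{corol:prac:1} and \ref{corol:prac:2} are stated to be special cases), by checking that the two hypotheses listed here fit into the abstract framework. Concretely, the strategy is to use the Basic Criterion for common hypercyclic algebras (Theorem \ref{basic:alg}) after constructing, for each compact $I_0\subseteq I$ and each relevant "target datum," a suitable finite partition of $I_0^d$ into small pieces together with a choice of iterates $n$ and of a polynomial-type element of the algebra on which the product of the $d$ shifts acts approximately like a translation. Since the product here is coordinatewise, a monomial $e_j^{\,p}$ (the $j$-th basis vector to the $p$-th power) is again essentially a basis vector, so the algebra structure is mild and the main work is the shift estimate, exactly as in \cite{BCM21}.

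The key steps, in order, would be: (1) Fix a compact $I_0\subseteq I$ and the constants $D_1,D_2,\gamma,N$ given by the hypotheses. (2) For a target block size, cover $I_0^d$ by boxes of side length comparable to $c/\log n$ for a small constant $c$; the number of such boxes is $O((\log n)^d)$, which is $o(n)$ — this polynomial-versus-exponential bookkeeping is why the logarithmic Lipschitz condition is the natural companion of the merely polynomial lower bound $D_2 n^\gamma$, in contrast to the $n^\alpha$ regime of Corollary \ref{corol:prac:1}. (3) On each box, pick a center $\lambda^{(0)}$ and use the Lipschitz hypothesis to show that for every $\lambda$ in the box, $\big|\sum_{i\le n}\log w_i(\lambda_k) - \sum_{i\le n}\log w_i(\lambda^{(0)}_k)\big|$ is bounded by $D_1\log(n)\cdot(c/\log n) = D_1 c$, hence the products $w_1(\lambda_k)\cdots w_n(\lambda_k)$ and $w_1(\lambda^{(0)}_k)\cdots w_n(\lambda^{(0)}_k)$ agree up to a fixed multiplicative constant $e^{D_1 c}$, uniformly over the box. (4) Use the lower bound $w_1(a)\cdots w_n(a)\ge D_2 n^\gamma\to\infty$ to guarantee that the "spreading" needed to approximate an arbitrary target monomial can be achieved: one sends a far-out monomial $e_{n+j}^{\,p}$ (or a finite combination thereof) down via the product shift, the norm gets multiplied by $\big(\prod_{i} w_i\big)^{p}$ up to the uniform constant, and because this quantity diverges one can also make the auxiliary "error" monomials small by placing them even further out. (5) Assemble these per-box data, verify the finitely-many-pieces-with-controlled-constants hypotheses of Theorem \ref{basic:alg}, and conclude the existence of a common hypercyclic algebra.

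The main obstacle I expect is step (2)–(4) done \emph{simultaneously in all $d$ coordinates and uniformly over the box}: one needs a single index $n$ and a single polynomial element of the algebra that works for the product $B_{w(\lambda_1)}\times\cdots\times B_{w(\lambda_d)}$ for every $\lambda$ in the box at once. In dimension one the parameters can be linearly ordered and inequalities like $\lambda<\lambda'$ used to kill problematic cross terms, but here — as the introduction stresses — no such ordering is available, so the argument must rely purely on the proximity afforded by the $O(1/\log n)$ box size. The delicate point is to check that the number of boxes, $O((\log n)^d)$, can be absorbed against the exponential freedom coming from choosing how far out to place the approximating monomials, while the lower bound is only polynomial; getting these rates to match is precisely the content that distinguishes this corollary from the $n^\alpha$ case, and it is where the hypotheses were tailored to make the general theorem applicable.
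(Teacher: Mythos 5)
There is a genuine gap. The paper's proof of this corollary is a short verification of the hypotheses of the practical criterion (Theorem \ref{thm:unifcrit}): one fixes $m'$ and $I_0$, chooses $\alpha<\min\big(\tfrac{\gamma}{dm'},\tfrac1d\big)$ so that $D_1\log(n)\leq C_1 n^\alpha$, then $\beta\in(\alpha d,\tfrac{\gamma}{m'})$ and $C_2<\tfrac{\gamma-\beta}{D_1}$, and checks the two displayed estimates (\ref{cond:theo:simple}) and (\ref{cond2:theo:simple}) with $F(n)=D_1\log(n)$, using monotonicity of $n\mapsto\log(n)/n^\alpha$; the covering machinery (Lemma \ref{lem:coveringF}, with boxes of side $\tau/n_j^\alpha$, not $c/\log n$) is already packaged inside Theorem \ref{thm:unifcrit} and need not be touched. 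Your proposal says it will "derive the corollary from the general theorem," but you never state or verify that theorem's hypotheses; instead you sketch a direct argument with boxes of side $c/\log n$, and you leave precisely the technical heart unresolved: you do not say how to attach an increasing time sequence $(n_j)$ to the boxes, nor how to control the off-diagonal terms of Theorem \ref{basic:alg} (conditions (II)--(III)), where $|\lambda-\lambda_j|$ is of the order of $\mathrm{diam}(K)$ rather than $O(1/\log n)$ and the factor $\exp\big(D_1\log(n_j)|\lambda-\lambda_j|\big)=n_j^{D_1|\lambda-\lambda_j|}$ must be beaten by $w_1\cdots w_{n_j-n_i}\geq D_2(n_j-n_i)^\gamma$ together with a summability condition over all boxes. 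You explicitly flag this rate-matching as "the main obstacle," which is exactly the point a proof must settle; naming it is not closing it.

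Two further inaccuracies compound the gap. First, your appeal to "exponential freedom coming from choosing how far out to place the approximating monomials" is wrong in this regime: the available decay is only polynomial, $\big\|e_k/[w_1(a)\cdots w_k(a)]^{1/m'}\big\|\lesssim k^{-\gamma/m'}$, and this is why the paper must take $\beta<\gamma/m'$ and $\alpha<\gamma/(dm')$ and rely on the covering's summability properties $\sum_j n_j^{-\beta}\leq\eta$, $\sum_{l\neq j}|n_l-n_j|^{-\beta}\leq\eta$. Second, you dismiss the algebra aspect as "mild," but the dependence on the powers $m\in[m',m'']$ is exactly where the hypotheses get tightened: the $1/m'$-th root of the weight product weakens the decay (hence the constraint involving $m'$), and conditions (II)--(IV) of Theorem \ref{basic:alg} involve $m$-th powers of the pre-images, none of which appears in your outline. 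In short, the proposal identifies the right general shape (reduce to a criterion built on a covering plus the Lipschitz and lower-bound hypotheses) but neither performs the paper's actual two-line verification of Theorem \ref{thm:unifcrit} nor supplies a self-contained substitute for it.
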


Thus we have found a large class of families of operators parametrized by subsets of $\RR^d$ admitting common hypercyclic algebras. Particularly, for both $w_n(a)=1+\frac{a}{n^{1-\alpha}}$, with $\alpha\in(0,1/d)$, and $w_n(a)=1+\frac{a}{n},$ the family $\big(B_{w(\lambda_1)}\times \cdots \times B_{w(\lambda_d)}\big)_{\lambda\in (0,+\infty)^d}$ admits a common hypercyclic algebra for the coordinatewise product.

As for the characterization \cite[Theorem 2.1]{BCM21}, the same holds true for the hypercyclic algebras for the coordinatewise product under natural modifications. For a particular case of weights $\big(w_n(\lambda)\big)_{\lambda}$ with the property that the function $a\mapsto \sum_{k=1}^n\log\big(w_k(a)\big)$ is $n^\alpha$-Lipschitz, we obtained the following equivalence.

\begin{proposition}
Let $\alpha\in(0,1]$ and $w_1(a)\cdots w_n(a)=\exp(an^\alpha)$ and $X=\ell_p(\NN)$ or $c_0(\NN)$ endowed with the coordinatewise product. The following assertions are equivalent:
\begin{enumerate}[(a)]
    \item $(B_{w(a)}\times B_{w(b)})_{(a,b)\in\Lambda}$ has a common hypercyclic vector in $X^2$;
    \item $(B_{w(a)}\times B_{w(b)})_{(a,b)\in\Lambda}$ has a common hypercyclic algebra in $X^2$.
\end{enumerate}
\end{proposition}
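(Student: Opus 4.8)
The plan is to prove the non‑trivial implication $(a)\Rightarrow(b)$, since $(b)\Rightarrow(a)$ is immediate (any nonzero element of a hypercyclic algebra is in particular a common hypercyclic vector). So assume $(B_{w(a)}\times B_{w(b)})_{(a,b)\in\Lambda}$ has a common hypercyclic vector; with the explicit normalization $w_1(a)\cdots w_n(a)=\exp(an^\alpha)$, the existence of a common hypercyclic vector forces a geometric constraint on $\Lambda$. Indeed, the classical obstruction (Borichev's argument, as quoted in the introduction for $(sB\times tB)$) shows that $\Lambda$ cannot be too large: tracking for which $n$ the $n$‑th iterate of $B_{w(a)}\times B_{w(b)}$ can simultaneously approximate a fixed target on both coordinates, one sees that $\Lambda$ must have Lebesgue measure zero, and more precisely must be contained in a countable union of graphs of the form $b=\psi(a)$ (or be otherwise "thin" in the anti‑diagonal direction). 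I would first isolate this structural consequence of $(a)$ as the engine of the proof.

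Next I would feed that structural information into the algebra‑version of the Basic Criterion (Theorem~\ref{basic:alg}) together with the Costakis–Sambarino–type machinery and the partition ideas from \cite{BCM21}. Concretely: given the thinness of $\Lambda$, I would build, for each compact $\Lambda_0\subset\Lambda$ and each $\varepsilon>0$, a partition of $\Lambda_0$ into finitely many pieces, on each of which the two parameters $(a,b)$ vary in a controlled window, and then choose indices $n$ (one per piece) large enough that on the first coordinate $B_{w(a)}^n$ sends a suitable monomial $P(a)$ close to a prescribed polynomial while the partial products $w_1(a)\cdots w_n(a)=\exp(an^\alpha)$ stay uniformly large — and similarly, using the \emph{same} $n$, on the second coordinate. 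The $\alpha$‑Lipschitz property of $a\mapsto\sum_{k\le n}\log w_k(a)=an^\alpha$, i.e. the factor $n^\alpha$, is exactly what lets one pass from "hypercyclic vector exists" (which only needs this control along a sparse sequence of $n$'s) to "hypercyclic algebra exists" (which in the coordinatewise setting requires the extra room to handle products $x^{j_1}\cdots x^{j_k}$ of iterates): the measure/graph constraint from $(a)$ guarantees that the windows can be taken small enough that the Lipschitz estimate absorbs the perturbation uniformly over each piece.

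The technical heart is to verify the three conditions of Theorem~\ref{basic:alg} on $X^2$ with the coordinatewise (and componentwise) product: density of a dense subalgebra of polynomials on each coordinate, a dense set of pairs $(P,m)$ witnessed along the chosen $n$'s, and the quantitative control of $B_{w(a)}^n$ on monomials — the latter reducing, via the product formula $\prod w_k(a)=\exp(an^\alpha)$, to elementary estimates of $\exp(jan^\alpha)$ against $\exp(a n^\alpha)$ for the relevant exponents $j$. Since both coordinates carry weights with the \emph{same} profile $\exp(\cdot\, n^\alpha)$, there is no competition between the two factors once $\Lambda$ is known to be thin, so a single index $n$ works for both; this is where the hypothesis $(a)$ is genuinely used rather than just the shape of the weights.

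The main obstacle I expect is the two‑dimensional partition step: unlike in one dimension, there is no order on $\Lambda\subset\RR^2$ allowing one to discard "problematic factors" via inequalities $\lambda<\lambda'$ (as noted in the introduction), so one must instead exploit proximity — and to do that one needs the thinness of $\Lambda$ extracted from $(a)$ to be quantitatively strong enough (containment in countably many Lipschitz graphs, with summable "widths") to make the finite partition with small windows possible while still covering $\Lambda_0$. Making that quantitative reduction precise, and checking it dovetails with the exponents $j_1,\dots,j_k$ appearing in the algebra criterion, is the step that requires care; everything after it is a routine, if lengthy, adaptation of the arguments in \cite{BCM21} and \cite[Theorem 3.12]{BCP2} to the present normalized weights.
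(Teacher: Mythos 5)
Your proposed engine for $(a)\Rightarrow(b)$ --- that the existence of a common hypercyclic vector forces $\Lambda$ to have Lebesgue measure zero, indeed to sit inside countably many graphs $b=\psi(a)$ --- is false over the range of $\alpha$ covered by the statement. For $\alpha\in(0,1/2)$ the weights $w_1(a)\cdots w_n(a)=\exp(an^\alpha)$ satisfy the hypotheses of Corollary \ref{corol:prac:1} with $d=2$, so the family parametrized by the \emph{whole} quadrant $(0,+\infty)^2$ already admits a common hypercyclic vector (even an algebra); Borichev's obstruction is specific to the case of multiples of $B$, i.e.\ essentially $\alpha=1$, and does not transfer to these weights. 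So the structural ``thinness'' you want to extract from (a) simply does not exist in general, and even for $\alpha=1$, measure zero does not give containment in countably many Lipschitz graphs, nor the quantitative covering with ``summable widths'' you would need to run the partition argument. In effect your plan asks you to re-derive, from an unproved geometric description of $\Lambda$, the covering data whose existence is precisely the content of the characterization of common hypercyclicity --- that is, to redo the hard part of \cite{BCM21} rather than use it.

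The paper's proof is much shorter and goes through Theorem \ref{thm:carac}: condition (c) there characterizes common hypercyclic algebras for every $m$, and restricted to $m=1$ it characterizes common hypercyclic vectors. The only place where $m$ enters is condition (ii), namely
\[
\left\|\sum_{k=1}^q\frac{1}{\big[w_1(\lambda_k(i))\cdots w_{n_k}(\lambda_k(i))\big]^{1/m}}e_{n_k}\right\|<\veps ,
\]
and for $w_1(a)\cdots w_n(a)=\exp(an^\alpha)$ this holds automatically for \emph{every} $m$ once the $n_k$ are large and $N$-separated, because $\sum_{n\geq N_0}\exp(an^\alpha)^{-1/m}<\veps$ for $N_0$ large (uniformly over $a$ in a compact set). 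Hence (a) gives condition (c) with $m=1$, the same data $(n_k,\lambda_k)$ verifies (i) and (iii) for arbitrary $m$, (ii) comes for free from the summability above, and (c) for all $m$ gives (b). If you want to salvage your write-up, replace the ``thinness of $\Lambda$'' step by this observation: no geometric information about $\Lambda$ is needed, only the fact that the $m$-dependence of the characterization is concentrated in a condition that these particular weights satisfy trivially.
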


\subsection{Convolution product}

As we might expect, the convolution product is more complicated to be dealt with. With the coordinatewise product, since it satisfies $\big(\sum a_n e_n \big)^m=\sum a_n^me_n$, one has more liberty to play with the coefficients without worrying about changes on the supports, we can easily define $m$-roots for vectors and, as a consequence, the adapted conditions in the criteria for algebras change only for a few new powers in comparison with the original criteria for vectors. On the flip side, the convolution product mixes the supports and is hence less predictable. This behavior usually lead to more complicated criteria for the existence of algebras under this product, which are in general not analogous to classical results for the existence of single vectors.

As remarked by its authors, criterion \cite[Theorem 3.12]{BCP2} for the existence of common hypercyclic algebras fails to apply to the family of weights $w_n(\lambda)=1+\frac{\lambda}{n}$, what can be thought as a surprise as this weight should not be hard to work with. In fact, for both examples given by the authors, which are $(\lambda B)_{\lambda>1}$ and $(\lambda D)_{\lambda>0}$, the functions $\lambda\mapsto \sum_{k=1}^n \log\big(w_k(\lambda)\big)$ are $n$-Lipschitz and the divergence of $\sum 1/n$ is used in the construction of the partition. For the weights $w_n(\lambda)=1+\frac{\lambda}{n}$, however, the function $\lambda\mapsto \sum_{k=1}^n \log\big(w_k(\lambda)\big)$ is $\log(n)$-Lipschitz, and the series $\sum_{n}\frac{1}{\log(n)}$ diverges much faster than $\sum 1/n$. Hence, what one can expect to do is to construct a partition specifically for this last example and to try to use this faster divergence to avoid using \cite[Lemma 3.11]{BCP2}. This method should also work with the family of weights $w_n(\lambda)=(1+\frac{1}{n})^\lambda$ or any other family for which $w_1(\lambda)\cdots w_n(\lambda)$ behaves like $n^\lambda$.

One could think that the fast divergence of $\sum\frac{1}{\log(n)^d}$ could lead to a positive result in higher dimensions. This is indeed true, as we are going to prove in this paper, but not by using the dyadic covering discussed in Section \ref{sec:covering10} below. In fact, this covering doesn't seem to be adapted to $w_n(a)=1+\frac{a}{n}$ on $\ell_1(\NN)^d$, as it contains too many elements. Fortunately, the fact that the family induced by $w_n(a)=1+\frac{a}{n}$ satisfies a $\log(n)$-Lipschitz condition (rather than $n^\alpha$-Lipschitz) will allow us to construct a (trivial) covering adapted to this case and obtain the following result.

\begin{theorem}\label{thm:alglog}
If $w_1(a)\cdots w_n(a)=n^a$ (or if $w_n(a)=1+\frac{a}{n}$ with a few modifications in the proof), then, for all $d\geq 1$, the family $\big(B_{w(\lambda_1)}\times \cdots \times B_{w(\lambda_d)}\big)_{\lambda\in (0,+\infty)^d}$ admits a common hypercyclic algebra on $\ell_1(\NN)^d$ with the convolution product.
\end{theorem}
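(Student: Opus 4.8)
The plan is to apply the algebra-adapted Basic Criterion for common hypercyclicity (Theorem \ref{basic:alg}) to the family $T_\lambda = B_{w(\lambda_1)}\times\cdots\times B_{w(\lambda_d)}$ on $\ell_1(\NN)^d$. The overall architecture is standard: one fixes a compact parameter box $\Lambda_0=[a_-,a_+]^d$ (reducing to this case by $\sigma$-compactness and a Baire-category/countable-intersection argument), one fixes a countable dense family of ``target'' polynomials $P$ in $d$ tuples of variables (these polynomials, evaluated on $(e_{n^{(1)}},\dots)$, play the role of the dense orbit points), and one must produce, for each $P$ and each $\veps>0$, a partition of $\Lambda_0$ into finitely many pieces together with a single candidate algebra generator $u=(u^{(1)},\dots,u^{(d)})$ (or rather, one shows the set of good generators is residual) such that on each piece of the partition there is an iterate $n$ with $T_\lambda^n(u^{k})$ close to the desired target, for every relevant multi-power $k$. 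Because the product is the convolution product, taking $u^{(j)} = \sum_r c_r e_{r}$ and raising it to the power $m$ spreads mass across the support in a controlled way; the computation $B_{w(a)}^n$ applied to $e_r$ multiplies by $w_{r-n+1}(a)\cdots w_r(a)$ when $r\geq n$, and the key asymptotic $w_1(a)\cdots w_n(a)=n^a$ gives $B_{w(a)}^n(e_{r}) = \frac{r^a}{(r-n)^a}\,e_{r-n}$ up to the obvious convention, which for $r$ much larger than $n$ is close to $n^{a}\cdot(r/(r-n))^a$; the point is to choose the supports of $u^{(j)}$ far out and spaced so that the ``tail'' factors $(r/(r-n))^a$ are uniformly near $1$ on $\Lambda_0$.

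The genuinely new ingredient, and the reason the theorem is stated separately from the dyadic-covering results of Section \ref{sec:covering10}, is the construction of the partition. First I would record that the function $a\mapsto \sum_{k=1}^n\log w_k(a) = \log(n^a) = a\log n$ is exactly $\log n$-Lipschitz on $\Lambda_0$ (in each coordinate), so that moving $\lambda$ by $\delta$ in one coordinate changes $\log\big(w_1(\lambda_j)\cdots w_n(\lambda_j)\big)$ by at most $\delta\log n$. Consequently, to approximate a target to within $\veps$ using an iterate of length $n$, it suffices to partition each coordinate interval $[a_-,a_+]$ into $O(\delta^{-1})$ subintervals of length $\delta$ with $\delta$ of order $1/\log n$; the resulting $d$-dimensional grid has $O((\log n)^d)$ cells. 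The decisive estimate is then a counting/covering argument: one needs infinitely many scales $n_1<n_2<\cdots$ at which one can treat all $O((\log n_\ell)^d)$ cells with \emph{distinct} iterates drawn from a window of indices, and this works precisely because $\sum_\ell 1/(\log n_\ell)^d$ can be made to diverge — equivalently, one can choose the $n_\ell$ spaced so that there are enough integers in each window $[n_\ell, n_\ell + C(\log n_\ell)^d]$ to assign one to each grid cell. This is where the $\log$-Lipschitz hypothesis beats the $n^\alpha$-Lipschitz case handled by the dyadic covering: there the number of cells is polynomial in $n$ and the corresponding series converges for $d$ large, while here it stays ``just barely'' summable-to-infinity for every fixed $d$. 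This ``trivial covering'' is, I expect, the main obstacle — not because any single estimate is hard, but because one must juggle simultaneously: the spacing of the supports of $u^{(j)}$ (to kill cross terms from the convolution and to control tail factors), the spacing of the scales $n_\ell$ (to keep iterates distinct across cells and across the finitely many multi-powers $k$ needed for a given $P$), and the width $\delta_\ell\asymp 1/\log n_\ell$ of the grid (to get $\veps$-approximation), all while the lower bound $w_1(a)\cdots w_n(a)\geq D_2 n^{a_-}\to\infty$ guarantees the ``going down'' part of the criterion (that $T_\lambda^n$ sends a fixed small vector near $0$).

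Finally I would assemble these pieces: given $P$ and $\veps$, pick a scale $n_\ell$ large enough that (i) $(\log n_\ell)^{d}\le$ (number of available distinct iterates in the window), (ii) the supports of the chosen generator are far enough out that convolution cross-terms and tail factors contribute less than $\veps$, and (iii) $\delta_\ell\log n_\ell$ is small enough for $\veps$-approximation; then on cell number $j$ of the grid use iterate number $n_\ell+(\text{offset}_j)$ to push $u^{k}$ within $\veps$ of the target for every needed multi-power $k$ simultaneously (the multi-powers only multiply the bookkeeping by a fixed constant depending on $\deg P$). The residuality of the set of good generators $u$, and the passage from a single compact box back to $(0,+\infty)^d$, follow the now-routine Baire-category scheme as in \cite{BCM21, BCP2}. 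For the variant $w_n(a)=1+\frac{a}{n}$ one replaces the exact identity $w_1\cdots w_n=n^a$ by the asymptotic $w_1(a)\cdots w_n(a)\sim C(a)\,n^{a}$ (from $\sum_{k=1}^n\log(1+a/k) = a\log n + O(1)$ uniformly on $\Lambda_0$), which affects only the constants $D_2$ and the tail estimates, leaving the partition and counting arguments untouched — these are the ``few modifications'' alluded to in the statement.
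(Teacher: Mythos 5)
Your plan has a genuine gap, and it starts with the criterion you invoke. Theorem \ref{basic:alg} is built around coordinatewise $m$-th roots $v^{1/m'}$ and the maps $S_{\lambda,m}$, which are forward shifts with weights $w_n^{-1/m}$; this machinery makes sense precisely because the coordinatewise product satisfies $(\sum a_ne_n)^m=\sum a_n^m e_n$, and it has no analogue for the convolution product (a finitely supported vector has, in general, no finitely supported convolution $m$-th root). For the convolution case the paper instead applies Proposition \ref{prop:chacriterion}: for each fixed $m$ one must exhibit a single $u\in U$ such that, for every $\lambda$ in the compact set, some iterate $N$ satisfies $T_\lambda^N(u^n)\in W$ for all $n\leq m-1$ and $T_\lambda^N(u^m)\in V$. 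Your proposal never explains how to achieve this for $m\geq 2$: choosing supports ``far out and spaced'' so that cross terms are negligible is the single-vector ($m=1$) construction, and by itself it gives no mechanism by which the $m$-th convolution power lands on the target while all lower powers are simultaneously pushed into $W$ by the same iterate.

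The missing idea is the specific shape of the generator. In the paper one takes $u'=u_1+\sum_{j,l}d'_{l,j}e_{N_j-(m-1)\sigma+l}+\veps_1 e_\sigma$ with $\veps_1=\big(w_1(a')\cdots w_{m\sigma}(a')\big)^{-1/m}$ and $d'_{l,j}=v_{1,l}/\big(m\veps_1^{m-1}w_{l+1}(\lambda_j)\cdots w_{l+N_j}(\lambda_j)\big)$: in $u'^{\,m}$ the useful term is the \emph{mixed} product $m\veps_1^{m-1}d'_{l,j}e_{N_j+l}$ of the separating term $\veps_1e_\sigma$ (to the power $m-1$) with one small block, which after applying $B_{w(\lambda)}^{N_i}$ reproduces $v_{1,l}e_l$, while every term of $u'^{\,n}$ with $n<m$ has support strictly below $N_i$ and is annihilated. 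Your covering discussion also diverges from what is needed: the proof does not use a window-counting argument based on the divergence of $\sum 1/(\log n)^d$, but a trivial grid of $q\approx(\log\sigma)^3$ squares of side comparable to $(\log\sigma)^{-3/2}$ (so that the $\log(N_i)$-Lipschitz bound times the cell diameter tends to $0$), together with iterates $N_j=(m-1)\sigma+\sigma^{\frac{m-1}{m}}(j+1)^r$; the inter-cell interference estimate then hinges on the positivity of $c'=\frac{a'}{b'}-\frac1m$, which forces a preliminary reduction to parameter boxes with $b'<2a'$, $b''<2a''$ (and $m\geq2$), a constraint absent from your outline. Without the separating-term mechanism and this quantitative choice of $(\veps_i,d_{l,j},N_j,q)$, the assembling step you describe cannot be carried out.
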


\section{Coordinatewise product}

In what follows we assume that $X$ is a Fréchet sequence algebra under the coordinatewise product in which $\textrm{span}(e_i)$ is dense, $\Lambda\subset \RR^d$ is a $\sigma$-compact set of parameters and, for each $\lambda=(\lambda_1,...,\lambda_d) \in \Lambda$, the operator $T_\lambda:X^d\to X^d$ is defined by $T_{\lambda}:=B_{w(\lambda_1)}\times\cdots\times B_{w(\lambda_d)}$, where $\big(w(x)\big)_x$ is a continuous family of admissible weights and the map $(\lambda, v)\mapsto T_\lambda(v)$ is continuous. Also, for each $\lambda\in\Lambda$ and $m\in\NN$, we let $\D:=\prod_d\textrm{span}(e_i)$ and define $S_{\lambda, m}:\D\to\D$ by \[S_{\lambda,m}:=F_{w(\lambda_1)^{-1/m}}\times \cdots\times F_{w(\lambda_d)^{-1/m}},\]
where each $F_{w(x)^{-1/m}}$, $x\in I$, is the weighted forward shift on $X$ with weight sequence $\big(w_{n}^{-1/m}(x)\big)_n$. The symbol $\|\cdot\|$ represents the $F$-norm of $X$ and thus satisfies, for all $u\in X$ and $\lambda\in \CC$,
\begin{itemize}
    \item $|\lambda|\leq 1\Rightarrow \|\lambda u\|\leq \|u\|$;
    \item $\lim_{\lambda\to 0}\|\lambda u\| = 0$;
    \item $\|\lambda u\|\leq (|\lambda|+1)\|u\|.$
\end{itemize}
We will make use of the following criterion from \cite{BCP2}.

\begin{theorem}[Basic Criterion for algebras]\label{basic:alg}
Assume that, for every compact $K\subset \Lambda$, every $m'\leq m''$ in $\NN$, every $v\in \mathcal{D}$, every $N>0$ and every $\varepsilon>0$, there exist $q\in\NN$,  $\lambda_0,\dots,\lambda_q\in \Lambda$, sets $\Lambda_0,\dots,\Lambda_q\subset \Lambda$ and positive integers $n_0,\dots,n_q$ with $n_0>N$ and $n_{i+1}-n_i>N$ for $i=0,...,q-1$ such that
\begin{enumerate}[(I)]
\item $\bigcup_i \Lambda_i\supset K$;
\item for all $\lambda \in \Lambda_i$, $i=0,...,q$, and all $m\in[m',m'']$, \[\Bigg\|\sum_j S_{\lambda_j,m'}^{n_j}\big(v^{1/m'}\big)\Bigg\|\leq\varepsilon\quad\quad\text{and}\quad\quad\Bigg\|\sum_{j\ne i} T_{\lambda}^{n_i}\Big(S_{\lambda_j,m'}^{n_j}\big(v^{1/m'}\big)\Big)^m\Bigg\| \leq \varepsilon ; \]
\item  for all $\lambda \in \Lambda_i$, $i=0,...,q$, and all $m\in(m',m'']$, \[\Bigg\|T_\lambda^{n_i}\bigg(\Big(S_{\lambda_i,m'}^{n_i}\big(v^{1/m'}\big)\Big)^m\bigg)\Bigg\|\leq \veps;\]
\item for all $\lambda \in \Lambda_i$, $i=0,...,q$, \[\bigg\|T_{\lambda}^{n_i}\Big(S_{\lambda_i, m'}^{n_i}\big(v^{1/m'}\big)\Big)^{m'}-v\bigg\|\leq\varepsilon.\]
\end{enumerate}
Then $\bigcap_{\lambda\in \Lambda} HC(T_{\lambda})$ contains an algebra (but $0$).
\end{theorem}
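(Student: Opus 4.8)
The plan is to derive the conclusion from the Baire category theorem applied to the Fréchet (hence Baire) space $X^d$. First I would reduce the statement to the existence of a single ``generator'': it suffices to produce one $u\in X^d$ with $P(u)\in\bigcap_{\lambda\in\Lambda}HC(T_\lambda)$ for every nonzero polynomial $P$ with $P(0)=0$, since then the subalgebra of $X^d$ generated by $u$ contains $u\ne 0$ and all its nonzero elements lie in $\bigcap_\lambda HC(T_\lambda)$. Because $HC(T_\lambda)$ is invariant under multiplication by nonzero scalars and the $F$-norm satisfies $\|cu\|\le(|c|+1)\|u\|$, writing $T_\lambda^n\big(\sum_{m=m'}^{m''}c_m u^m\big)=\sum_m c_m T_\lambda^n(u^m)$ (and normalising so that $c_{m'}=1$) shows it is enough to find $u$ in the set $\mathcal G'$ of all vectors with the property: for every $m'\le m''$ in $\NN$, every $v$ in a fixed countable dense subset $\D_0$ of $\D$, every $\lambda\in\Lambda$ and every $p\in\NN$, there is $n$ with $\|T_\lambda^n(u^{m'})-v\|<1/p$ and $\|T_\lambda^n(u^m)\|<1/p$ for all $m\in(m',m'']$.

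Next I would write $\Lambda=\bigcup_{k\ge 1}K_k$ with each $K_k$ compact, and realise $\mathcal G'$ as a countable intersection of sets $\mathcal G'(k,m',m'',v,p)$, the one indexed by $(k,m',m'',v,p)$ requiring the two inequalities above (for the given $m',m'',v,p$) for every $\lambda\in K_k$ and some $n>p$. To each index I would attach a genuinely \emph{open} set $O(k,m',m'',v,p)\subseteq\mathcal G'(k,m',m'',v,p)$, defined as the union, over all finite families $(W_l,t_l)$ with $W_l$ relatively open in $\Lambda$ of compact closure, $\bigcup_l W_l\supseteq K_k$ and $t_l>p$, of the sets $\{u:\ \forall l,\ \forall\lambda\in\overline{W_l},\ \|T_\lambda^{t_l}(u^{m'})-v\|<1/p\ \text{and}\ \|T_\lambda^{t_l}(u^m)\|<1/p\ \text{for }m\in(m',m'']\}$. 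Each of these is open thanks to the assumed joint continuity of $(\lambda,u)\mapsto T_\lambda(u)$, the continuity of the product, and compactness of the $\overline{W_l}$; and membership in $O(k,m',m'',v,p)$ forces, for each $\lambda\in K_k$, some $t_l>p$ realising the inequalities, which gives the stated inclusion. It then remains to show each $O(k,m',m'',v,p)$ is dense, for Baire's theorem will then supply $u\in\bigcap O(k,m',m'',v,p)\subseteq\mathcal G'$.

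Density is where hypotheses (I)--(IV) enter. Given a nonempty open $W\subseteq X^d$, I would fix $w\in\D\cap W$ with a small $F$-norm ball around it inside $W$, and apply the hypothesis to $K=K_k$, the given $m'\le m''$ and $v$, with $\varepsilon$ small and $N$ larger than both $p$ and $m''(\sigma+\sigma')$, where $\sigma'$ bounds the support of $w$ and $\sigma$ that of $v$ (hence of the fixed root $v^{1/m'}$). With the resulting $q,(\lambda_i),(\Lambda_i),(n_i)$ I would set $y_j:=S_{\lambda_j,m'}^{n_j}(v^{1/m'})$, $x:=\sum_{j=0}^q y_j$, $u:=w+x$; the first inequality of (II) gives $\|x\|\le\varepsilon$, so $u\in W$. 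A support count does the rest: the blocks $y_j$ live on intervals separated by more than $N$, so every mixed monomial in the multinomial expansion of $x^m$ carries a factor $y_jy_{j'}$ ($j\ne j'$) with disjoint supports and hence vanishes, giving $x^m=\sum_j y_j^m$; and since $N$ dominates $m''(\sigma+\sigma')$, all terms $w^{m-a}x^a$ with $0<a<m$ vanish and $T_\lambda^{n_i}(w^m)=0$, so $T_\lambda^{n_i}(u^m)=\sum_j T_\lambda^{n_i}(y_j^m)$. For $\lambda\in\Lambda_i$, the $j=i$ term is within $\varepsilon$ of $v$ when $m=m'$ by (IV) and has size $\le\varepsilon$ when $m>m'$ by (III), while $\sum_{j\ne i}T_\lambda^{n_i}(y_j^m)$ has size $\le\varepsilon$ by the second inequality of (II); choosing $\varepsilon$ small enough that $2\varepsilon<1/p$ and that the ball of radius $\varepsilon$ about $w$ lies in $W$ yields the two inequalities with constant $2\varepsilon<1/p$ for all $\lambda\in\Lambda_i$. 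Finally, for this fixed $u$ the maps $\lambda\mapsto T_\lambda^{n_i}(u^{m'}),\,T_\lambda^{n_i}(u^m)$ are continuous, so the sets where all these quantities are $<1/p$ are relatively open, contain the $\Lambda_i$ and cover $K_k$; shrinking them and extracting a finite subcover of relatively compact pieces places $u$ in $O(k,m',m'',v,p)\cap W$.

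I expect the main difficulty to be precisely this last packaging: the hypothesis provides sets $\Lambda_i$ that need not be open, so one must pass, using the joint continuity and compactness, to an honest open cover of $K_k$ by relatively compact pieces on which the strict inequalities persist, all the while keeping the inclusion $O(k,m',m'',v,p)\subseteq\mathcal G'(k,m',m'',v,p)$ intact. The support bookkeeping in expanding $(w+x)^m$ is elementary but must be organised carefully, the crucial point being that a single $N$, depending only on $m''$ and on the supports of $w$ and $v$, kills every unwanted term for all exponents $m\in[m',m'']$ simultaneously.
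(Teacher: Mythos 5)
Your proof is correct, but you should be aware that the paper does not actually prove Theorem~\ref{basic:alg}: it is quoted verbatim from \cite{BCP2} (``We will make use of the following criterion from \cite{BCP2}''), so there is no in-text proof to compare against. Your Baire-category argument is exactly the standard one for such criteria (and is the mechanism behind \cite[Lemma 7.12]{BM09}, \cite{BCP1}, and \cite{BCP2}): write $\Lambda=\bigcup_k K_k$, reduce to finding a single generator $u$, realise the set of generators as a countable intersection of open sets, and use (I)--(IV) together with the separation of supports to prove each is dense. Two remarks worth keeping in mind. First, your support bookkeeping --- ``$y_jy_{j'}$ with disjoint supports vanishes'' and ``$w^{m-a}x^a=0$ for $0<a<m$'' --- is specific to the coordinatewise product, where the support of a product is the intersection of the supports; it fails for the convolution product, where it is the Minkowski sum. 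This is fine here, since Section~2 of the paper explicitly works with the coordinatewise product (the convolution case is handled separately by Proposition~\ref{prop:chacriterion}), but a reader should not mistake your argument for a proof of a product-agnostic statement. Second, the quantity $m''(\sigma+\sigma')$ you impose on $N$ is unnecessarily large for the coordinatewise case; $N>\max(\sigma,\sigma',p)$ already forces $w\cdot y_j=0$ and $y_j\cdot y_{j'}=0$ for $j\ne j'$. Finally, your extra layer of ``honest open sets'' $O(\cdot)$ is not strictly needed: the set $\{u:\ \forall\lambda\in K_k,\ \exists n,\ \|T_\lambda^n(u^{m'})-v\|<1/p,\ \|T_\lambda^n(u^m)\|<1/p\}$ is itself open by the usual compactness argument (for each $\lambda$ fix a witness $n(\lambda)$, take open $V_\lambda\times U_\lambda$ on which the strict inequalities persist, extract a finite subcover of $K_k$, intersect the $U_\lambda$'s); but nothing is wrong with the more explicit construction.
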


\subsection{A practical criterion for algebras}

The following result is a practical criterion which leads to the same applications as found in \cite{BCM21}, but now for hypercyclic algebras.

\begin{theorem}\label{thm:unifcrit}
Let $\alpha\in(0,1/d)$ and let $I\subset (0,+\infty)$ be $\sigma$-compact. Suppose that, for all $m'\in\NN$ and all $I_0\subset I$ compact, there exist $C_1,C_2>0$, $\beta>\alpha d$, $F:\NN\to(0,+\infty)$ with $F(n)\leq C_1 n^\alpha$ and $N_0,M_0>0$ such that, for all $n\geq N_0$,
\begin{enumerate}[(i)]
    \item $a \in I_0 \mapsto\sum_{i=1}^n\log(w_i(a))$ is $F(n)$-Lipschitz;
    \item
    for all $a\in I_0$, $\inf_{a\in I_0}w_{1}(a)\cdots w_{k}(a)\to +\infty$ as $k\to+\infty$;
    \item for all $a\in I_0$ and $k\geq N_0$,  \begin{equation}\label{cond:theo:simple}
        \Bigg\|\frac{\exp\Big(C_2\frac{F(n+k)}{(n+k)^\alpha}k^\alpha\Big)}{w_{1}(a)\cdots w_{k}(a)}e_{k}\Bigg\|\leq \frac{M_0}{k^{\beta}},
    \end{equation}
    \begin{equation}\label{cond2:theo:simple}
        \Bigg\|\frac{1}{\big[w_{1}(a)\cdots w_{k}(a)\big]^{1/m'}}e_{k}\Bigg\|\leq \frac{M_0}{k^{\beta}}.
    \end{equation}
\end{enumerate}
Then $\big(B_{w(\lambda_1)}\times\cdots\times B_{w(\lambda_d)}\big)_{\lambda\in I^d}$ admits a common hypercyclic algebra on $X^d$ for the coordinatewise product.
\end{theorem}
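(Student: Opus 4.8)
The plan is to verify the four conditions (I)--(IV) of the Basic Criterion for algebras (Theorem~\ref{basic:alg}), building the partition $\{\Lambda_i\}$ and the parameters $\lambda_i, n_i$ by hand. Fix a compact $K\subset I^d$, so $K\subset I_0^d$ for some compact $I_0\subset I$; fix $m'\le m''$, $v\in\D$, $N>0$, $\veps>0$. By density and multiplicativity of the product it suffices to treat $v$ of the form $v=(e_{p_1},\dots,e_{p_d})$ (or a suitable scalar multiple), since a general $v\in\D$ is a finite sum of such terms and the $F$-norm estimates pass through with an extra finite factor. For such $v$ one computes explicitly $\big(S_{\lambda_j,m'}^{n_j}(v^{1/m'})\big) = \big(\tfrac{1}{[w_1(\mu_1)\cdots w_{n_j}(\mu_1)]^{1/m'}}e_{n_j+p_1},\dots\big)$ coordinatewise, and then $T_\lambda^{n_i}$ of its $m$-th power shifts back by $n_i$ and multiplies by the ratio $\tfrac{w_1(\lambda_r)\cdots w_{n_i}(\lambda_r)}{[w_1(\mu_{j,r})\cdots w_{n_j}(\mu_{j,r})]^{m/m'}}$ in each coordinate $r$ (up to support bookkeeping with $p_r$, $n_i$, $n_j$), which is exactly the shape the three hypotheses (i)--(iii) are designed to control.

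Next I would set up the covering. Cover $I_0$ by finitely many intervals of small length; in dimension $d$ this gives a finite grid of boxes $\Lambda_i\subset I_0^d$, and for each $i$ choose $\lambda_i$ to be (say) the lower-left corner. The key quantitative choice is the length $\eta$ of the boxes and the integers $n_i$: following \cite{BCM21}, one arranges that consecutive boxes use integers $n_i$ spaced so that $n_{i+1}-n_i>N$ and that the ``diagonal'' term $T_{\lambda}^{n_i}\big((S_{\lambda_i,m'}^{n_i}(v^{1/m'}))^{m'}\big)$ reconstructs $v$ up to $\veps$: here the $m=m'$ powers cancel in the exponents, leaving $\prod_{r}\tfrac{w_1(\lambda_r)\cdots w_{n_i}(\lambda_r)}{w_1(\mu_{i,r})\cdots w_{n_i}(\mu_{i,r})}$, and the $F(n_i)$-Lipschitz hypothesis (i) forces this to be within $\veps$ of $1$ provided $\eta\cdot F(n_i)$ is small --- which, since $F(n)\le C_1 n^\alpha$ and $\alpha<1/d$, can be met for each box by choosing $\eta$ of order $n_i^{-\alpha}$ (this is where $\alpha<1/d$ enters: the number of boxes is $\asymp \eta^{-d}\asymp n_i^{\alpha d}$, and $\alpha d<\beta$ makes the tail sums over boxes summable). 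That handles (IV), and also the ``small tail'' first estimate in (II) via $\sum_j \|S^{n_j}_{\lambda_j,m'}(v^{1/m'})\|\le \sum_j M_0/n_j^\beta$ using \eqref{cond2:theo:simple}.

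For the cross terms in (II) and the higher-power term in (III): with $j\ne i$ one has a ratio whose $n_i$-part is $w_1(\lambda_r)\cdots w_{n_i}(\lambda_r)$ and whose $n_j$-part is $[w_1(\mu_{j,r})\cdots w_{n_j}(\mu_{j,r})]^{-m/m'}$. If $n_j>n_i$ (past indices), I use hypothesis (ii) --- $w_1\cdots w_k\to\infty$ --- together with the Lipschitz control to show the ratio is small: the numerator grows at most subexponentially in $n_i$ while the denominator, raised to power $m/m'\ge 1$, grows faster (for $m>m'$ strictly, one gets an extra honest power of the product; for $m=m'$ this is precisely the $j\ne i$ case where the supports $n_j+p_r-n_i$ vs $n_i+p_r-n_i$ differ and one term simply has no overlap / lands far out). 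If $n_j<n_i$ (future indices), the surviving factor is $\exp$ of a difference controlled by (i): $\log\big(w_1(\lambda_r)\cdots w_{n_i}(\lambda_r)\big)-\tfrac{m}{m'}\log\big(w_1(\mu_{j,r})\cdots w_{n_j}(\mu_{j,r})\big)$, and hypothesis \eqref{cond:theo:simple} --- which bundles exactly the factor $\exp\!\big(C_2\tfrac{F(n+k)}{(n+k)^\alpha}k^\alpha\big)/(w_1\cdots w_k)$ against $M_0/k^\beta$ --- is tailored to make each such term $\le M_0/n_j^\beta$ (resp.\ $n_i^\beta$), so the sums over $j$ are again dominated by the convergent series $\sum k^{-\beta}$. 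Condition (III), with $m>m'$, is the same computation with an extra genuine power of the divergent product $w_1(\lambda_r)\cdots w_{n_i}(\lambda_r)\to\infty$ (hypothesis (ii)) which kills the term outright.

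The main obstacle I anticipate is the simultaneous-in-all-coordinates bookkeeping: because each box $\Lambda_i\subset I_0^d$ must work for \emph{every} $\lambda\in\Lambda_i$ and every $m\in[m',m'']$ at once, the single index $n_i$ has to be chosen so that the $d$ separate one-dimensional ratios are controlled simultaneously, and the ``proximity'' must be uniform over the whole box rather than just at $\lambda_i$ --- this is exactly the point flagged in the introduction that ordering tricks like $\lambda<\lambda'$ are unavailable in $\RR^d$. Concretely, one must interleave the enumeration of the finitely many boxes with a rapidly increasing sequence $n_i$ so that (a) within each box the Lipschitz error $\eta F(n_i)$ is $\le \veps$, (b) the gaps $n_{i+1}-n_i$ exceed $N$, and (c) the cross-sums $\sum_{j\ne i} M_0/\min(n_i,n_j)^\beta$ stay $\le\veps$; the slack for (c) comes precisely from $\beta>\alpha d$ and $F(n)\le C_1 n^\alpha$, matching the recipe of \cite{BCM21}. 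Once the covering and the $n_i$ are fixed this way, conditions (I)--(IV) follow by assembling the coordinatewise estimates above, and Theorem~\ref{basic:alg} yields the common hypercyclic algebra.
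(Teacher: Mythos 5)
Your overall plan (verify Theorem \ref{basic:alg} by covering $K$ with boxes of side comparable to $\tau/n_i^\alpha$ and exploiting $\beta>\alpha d$) is the paper's plan, but there is a genuine gap at the one point the whole argument hinges on: the cross terms $j\neq i$ in condition (II). After applying $T_\lambda^{n_i}$, $\lambda\in\Lambda_i$, to $\big(S_{\lambda_j,m'}^{n_j}(v^{1/m'})\big)^m$, the terms with $n_j<n_i$ vanish outright (their support $n_j+l<n_i$ is annihilated by the backward shift — you have the two cases interchanged), while each surviving term $j>i$ carries the factor $\widehat{w}_{l+n_j}(x)/\widehat{w}_{l+n_j}(x_j)\leq \exp\big(F(l+n_j)\,|x-x_j|\big)$, where $x_j$ is the tag of a \emph{different} box. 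In the plain grid you propose, $|x-x_j|$ can be of order $\operatorname{diam}(K)$, so this factor can be as large as $\exp\big(C_1 n_j^\alpha \operatorname{diam}(K)\big)$; hypothesis (ii) is purely qualitative (no rate) and cannot absorb it, and hypothesis \eqref{cond:theo:simple} only applies when the exponent is at most $C_2\frac{F(l+n_j)}{(l+n_j)^\alpha}(l+n_j-n_i)^\alpha$, i.e.\ when $|x-x_j|\lesssim (n_j-n_i)^\alpha/n_j^\alpha$. This proximity property — item (c) of Lemma \ref{lem:coveringF} — is exactly what your covering must provide and what your sketch never establishes; note also that the resulting bound is $M_0/(l+n_j-n_i)^\beta$, so the cross-sum to control is $\sum_{j\neq i}|n_j-n_i|^{-\beta}$ (item (e)), not $\sum_{j\neq i}\min(n_i,n_j)^{-\beta}$ as you wrote.

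Moreover, producing a covering with all of (a)--(e) simultaneously is not a routine grid-plus-fast-$n_i$ construction: the boxes must shrink like $n_j^{-\alpha}$ while still covering $K$ (so the $n_j$ cannot grow too fast), the gaps $n_j-n_i$ must be large relative to $n_j$ whenever the boxes $\Lambda_i,\Lambda_j$ are far apart (so the enumeration of boxes must correlate spatial distance with time separation — precisely the ordering difficulty in dimension $d\geq 2$ flagged in the introduction), and the sums (d), (e) must be small. Resolving this tension is the content of Lemma \ref{lem:coveringF}, which the paper invokes (after reducing to $\operatorname{diam}(K)\leq cD$ by subdivision) and which your proposal in effect assumes without proof. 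The remaining pieces of your argument — the first half of (II) via \eqref{cond2:theo:simple} and property (d), condition (III) via the bounded ratio $\widehat{w}_{l+n_i}(x)/\widehat{w}_{l+n_i}(x_i)$ inside a single box together with the uniform divergence (ii) on the fixed coordinates $e_l$, $l\leq p$, and condition (IV) via the Lipschitz bound on a box of side $\tau/n_i^\alpha$ — are in line with the paper's proof, but as written the proof is incomplete until the covering with property (c) is actually constructed or Lemma \ref{lem:coveringF} is explicitly invoked and used in the $j>i$ estimate.
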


If we assume that $X$ admits a continuous norm, then condition (ii) follows from (iii) so it can be removed. Indeed, if $X$ has a continuous norm then the sequence $(e_n)$ is bounded below.

Before proving this result, we will show that the two practical Corollaries 3.2 and 3.6 from \cite{BCM21} (here stated as Corollaries \ref{corol:prac:1} and \ref{corol:prac:2}) give not only a common hypercyclic vector but also a common hypercyclic algebra for the coordinatewise product. The first corollary is proven as follows.

\begin{proof}[Proof of Corollary \ref{corol:prac:1}] Let $m'\in\NN$ and $I_0\subset I$ compact. By hypothesis there are $D_1,D_2,D_3>0$ and $N\geq 0$ as in the statement. We define $F(n):=D_1 n^\alpha$, $C_1:=D_1$ and fix $C_2<\frac{D_3}{D_1}$ and $\beta>\alpha d$. Also we let $N_0> N$ big enough so that, for all $k\geq N_0$, \[\min\Big\{D_2\exp\big((D_3-C_2D_1)k^\alpha\big),\big[D_2\exp(D_3k^\alpha)\big]^{1/m'}\Big\}\geq k^\beta.\]
Conditions (i) and (ii) are satisfied by hypothesis and, for all $n,k\geq N_0$ and $a\in I_0$, we have
\begin{align*}
    \Bigg\|\frac{\exp\Big(C_2\frac{F(n+k)}{(n+k)^\alpha}k^\alpha\Big)}{w_{1}(a)\cdots w_{k}(a)}e_{k}\Bigg\| 
    &= \frac{\exp\Big(C_2\frac{D_1(n+k)^\alpha}{(n+k)^\alpha}k^\alpha\Big)}{w_{1}(a)\cdots w_{k}(a)}\\
    &\leq \frac{\exp\Big(C_2D_1k^\alpha\Big)}{D_2\exp\big(D_3 k^\alpha\big)}\\
    &= \frac{1}{D_2\exp\big((D_3-C_2D_1)k^\alpha\big)}\\
    &\leq \frac{1}{k^\beta}
\end{align*}
and 
\[
    \Bigg\|\frac{1}{\big[w_{1}(a)\cdots w_{k}(a)\big]^{1/m'}}e_{k}\Bigg\|
    \leq \frac{1}{\big[D_2\exp(D_3k^\alpha)\big]^{1/m'}}\leq \frac{1}{k^\beta},
\]
what verifies (iii) and completes the proof.
\end{proof}

\begin{exa}
Let $d\geq 1$,  $X=\ell_p(\NN)$ or $c_0(\NN)$, $\alpha\in (0,1/d)$ and $(w(\lambda))_{\lambda>0}$ be the family of weights defined by $w_1(\lambda)\cdots w_n(\lambda)=\exp(\lambda n^\alpha)$ \big(or $w_n(\lambda) = 1 + \frac{\lambda}{n^{1-\alpha}}$\big) for all $n\geq 1$. Then the family $\big(B_{w(\lambda_1)}\times\cdots\times B_{w(\lambda_d)}\big)_{\lambda\in(0,+\infty)^d}$ admits a common hypercyclic algebra for the coordinatewise product.
\end{exa}


The second corollary is proven as follows.

\begin{proof}[Proof of Corollary \ref{corol:prac:2}]
Let $m'\in\NN$ and $I_0\subset I$ compact. From the hypothesis there are $D_1,D_2, \gamma >0$ and $N\geq 0$ as in the statement. Fixing $\alpha<\min\big(\frac{\gamma}{d m'},\frac{1}{d}\big)$, one can find $C_1$ big enough so that $D_1\log(n)\leq C_1 n^\alpha$ for all $n\in\NN$. Let $\beta\in\big(\alpha d, \frac{\gamma}{m'}\big)$ and $C_2<\frac{\gamma-\beta}{D_1}$. Finally let $N_0> N$ big enough so that $n\mapsto \frac{\log(n)}{n^\alpha}$ is decreasing on $[N_0,+\infty)$.
It is now easy to apply Theorem \ref{thm:unifcrit} with $F(n)=D_1\log(n)$. Conditions (i) and (ii) are satisfied by hypothesis and, for all $a\in I_0$ and $n,k\geq N_0$,
\[
    \Bigg\|\frac{\exp\Big(C_2\frac{F(n+k)}{(n+k)^\alpha}k^\alpha\Big)}{w_{1}(a)\cdots w_{k}(a)}e_{k}\Bigg\|= \frac{\exp\Big(C_2D_1\frac{\log(n+k)}{(n+k)^\alpha}k^\alpha\Big)}{w_{1}(a)\cdots w_{k}(a)}\leq \frac{k^{C_2D_1}}{D_2k^\gamma}=\frac{D_2^{-1}}{k^{\gamma-C_2D_1}}\leq \frac{D_2^{-1}}{k^\beta}
\]
and
\[
    \Bigg\|\frac{1}{\big[w_{1}(a)\cdots w_{k}(a)\big]^{1/m'}}e_{k}\Bigg\| = \frac{1}{\big[w_{1}(a)\cdots w_{k}(a)\big]^{1/m'}} \leq \frac{1}{\big[D_2k^\gamma\big]^{1/m'}}\leq \frac{D_2^{-1/m'}}{k^\beta},
\]
what verifies (iii) and completes the proof.
\end{proof}

\begin{exa}
Let $d\geq 1$,  $X=\ell_p(\NN)$ or $c_0(\NN)$ and $(w(\lambda))_{\lambda>0}$ be the family of weights defined by $w_n(\lambda) = 1 + \frac{\lambda}{n}$ \big(or  $w_n(\lambda) := \big(1 + \frac{1}{n}\big)^\lambda$\big). Then the family $\big(B_{w(\lambda_1)}\times\cdots\times B_{w(\lambda_d)}\big)_{\lambda\in(0,+\infty)^d}$ admits a common hypercyclic algebra for the coordinatewise product.
\end{exa}

The problem of finding a common hypercyclic \emph{vector} in the limit case $\alpha=1/d$ remains open. Although we don't intend to explore this difficulty, it remains as an interesting open problem. We state it here in the form of a bi-dimensional example.
\begin{question}
Does there exist a set of parameters $\Lambda\subset \RR^2$ with positive Lebesgue measure such that the family $(B_{w(\lambda)}\times B_{w(\mu)})_{(\lambda,\mu)\in\Lambda}$, with $w_n(\lambda)=1+\frac{\lambda}{\sqrt{n}}$, admits a common hypercyclic vector on $c_0(\NN)\times c_0(\NN)$?
\end{question}

\subsubsection{A covering lemma}\label{sec:covering10}
In order to prove that the conditions of Theorem \ref{thm:unifcrit} are enough to apply the Basic Criterion, we need a covering result taken from \cite{BCM21}. The $d$-dimensional version of this lemma holds true but, for simplicity, we will state it for $d=2$. Let us fix $\alpha\in(0,1/2)$, $\beta>2\alpha$ and $D>0$. 
An element $\lambda$ of $\RR^2$ will be written $\lambda=(x,y)$.


\begin{lemma}\label{lem:coveringF}
There exists a constant $c>0$ such that, for all $D>0$ and all compact squares $K\subset \RR^2$ with $\textrm{diam}(K)\leq cD$, then,
for all $\tau>0$, for all $\eta>0$, for all $N\geq 1$, there exist $q\geq 1$, a sequence of integers $(n_j)_{j=0,\dots,q-1}$, a sequence of parameters $(\lambda_j)_{j=0,\dots,q-1}$, a sequence of compact squares $(\Lambda_j)_{j=0,\dots,q-1}$ such that
\begin{enumerate}[(a)]
\item $n_0\geq N$, $n_{j+1}-n_j\geq N$;
\item  $K=\bigcup_{j=0}^{q-1}\Lambda_j$ and, for all $j=0,\dots,q-1$, writing $\lambda_j=(x_j,y_j)$, we have $\Lambda_j\subset \left[x_j,x_j+\frac\tau{n_j^\alpha}\right]\times  \left[y_j,y_j+\frac\tau{n_j^\alpha}\right]$;
\item for all $0\leq j<l\leq q-1$, for all $\lambda\in \Lambda_j$, for all $\mu\in \Lambda_l$, 
$$\|\lambda-\mu\|_\infty \leq \frac{D(n_l-n_j)^\alpha}{n_l^\alpha};$$
\item $\sum_{j=0}^{q-1}\frac 1{n_j^\beta}\leq \eta$;
\item for all $j\in\{0,\dots,q-1\}$, $\sum_{l\neq j}\frac{1}{|n_l-n_j|^\beta}\leq\eta$.
\end{enumerate}
\end{lemma}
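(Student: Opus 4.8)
The $d$-dimensional statement is proved the same way (replace the planar Hilbert curve by a space‑filling curve of $\RR^d$, of Hölder exponent $1/d$, and the hypotheses by $\alpha\in(0,1/d)$, $\beta>d\alpha$), so I treat only $d=2$. The plan is to work at a \emph{single scale}: cover $K$ by an $M\times M$ grid of cells, list them along a space‑filling curve so that closeness of indices forces closeness of cells, attach to the cells an arithmetic progression of times, and then tune the two free parameters — the grid size $M$ (eventually large) and the integer step $\gamma$. Fix once and for all an absolute constant $C_H$ such that, for any square of side $s$ and any $M$, if the cells of the $M\times M$ subdivision are listed along the Hilbert curve then the entries at positions $i$ and $i+r$ are within $\ell^\infty$‑distance $C_H\, r^{1/2}\, s/M$ (this is the discrete form of the $1/2$‑Hölder continuity of the Hilbert curve $[0,1]\to[0,1]^2$); set $c:=\dfrac{\sqrt2}{C_H+2}$. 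Now let $D>0$, a compact square $K\subset\RR^2$ of side $s$ with $\textrm{diam}(K)\le cD$, and $\tau,\eta>0$, $N\ge1$ be given.

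First I would build the data. Take $M=2^m$, subdivide $K$ into the $q:=M^2$ closed congruent subsquares of side $s/M$ and order them $\Lambda_0,\dots,\Lambda_{q-1}$ along the Hilbert curve; with $\lambda_j=(x_j,y_j)$ the lower‑left corner of $\Lambda_j$, the enumeration bound gives $\|\lambda-\mu\|_\infty\le(C_H+2)(l-j)^{1/2}s/M$ for all $j<l$, $\lambda\in\Lambda_j$, $\mu\in\Lambda_l$. Then pick an integer $\gamma$ with
\[
\max\!\big(N,\,A(\eta,M)\big)\ \le\ \gamma\ \le\ \big(\tfrac{\tau M}{s}\big)^{1/\alpha}M^{-2},
\]
where $A(\eta,M):=\big(2\eta^{-1}\sum_{k=1}^{M^2}k^{-\beta}\big)^{1/\beta}$, which is $O_\eta(1)$ if $\beta>1$, $O_\eta(\log M)$ if $\beta=1$ and $O_\eta(M^{2/\beta-2})$ if $\beta<1$; finally set $n_j:=(j+1)\gamma$ for $j=0,\dots,q-1$.

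Next I would check (a)--(e). (a): $n_0=\gamma\ge N$ and $n_{j+1}-n_j=\gamma\ge N$. (b): $\bigcup_j\Lambda_j=K$ by construction, and $\Lambda_j$ has side $s/M\le\tau/n_j^\alpha$ since $n_j\le M^2\gamma\le(\tau M/s)^{1/\alpha}$ by the choice of $\gamma$, hence $\Lambda_j\subset[x_j,x_j+\tfrac\tau{n_j^\alpha}]\times[y_j,y_j+\tfrac\tau{n_j^\alpha}]$. (c): for $j<l$ put $p:=l-j$; then $n_l-n_j=p\gamma$ and $n_l\le M^2\gamma$, so
\[
\frac{D(n_l-n_j)^\alpha}{n_l^\alpha}\ \ge\ \frac{D\,p^\alpha}{M^{2\alpha}},
\]
and it suffices that $(C_H+2)p^{1/2}s/M\le D p^\alpha M^{-2\alpha}$, i.e. $p^{1/2-\alpha}\le D M^{1-2\alpha}/\big((C_H+2)s\big)$; since $1/2-\alpha>0$ the left side is maximal at $p=M^2$, where it equals $M^{1-2\alpha}$, so the condition collapses to $s\le D/(C_H+2)$, which is exactly $\textrm{diam}(K)\le cD$. (d) and (e): $\sum_j n_j^{-\beta}=\gamma^{-\beta}\sum_{k=1}^{M^2}k^{-\beta}$ and, for fixed $j$, $\sum_{l\ne j}|n_l-n_j|^{-\beta}=\gamma^{-\beta}\sum_{l\ne j}|l-j|^{-\beta}\le2\gamma^{-\beta}\sum_{p=1}^{M^2}p^{-\beta}$, both $\le\eta$ because $\gamma\ge A(\eta,M)$.

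It remains to see that the window for $\gamma$ is non‑empty for $M$ large, which is where the two numerical hypotheses enter: its right end has order $M^{1/\alpha-2}\to+\infty$ because $\alpha<1/2$, while its left end is $O_\eta(1)+O_\eta\big(M^{\max(0,\,2/\beta-2)}\big)$, and $2/\beta-2<1/\alpha-2$ precisely because $\beta>2\alpha$; so for all sufficiently large $M=2^m$ the interval contains an integer $\gamma$, and the resulting $q,(n_j),(\lambda_j),(\Lambda_j)$ satisfy (a)--(e). I expect (c) to be the only genuinely delicate point: no enumeration of an $M\times M$ grid can beat Hölder exponent $1/2$, so a gap of $p$ indices only bounds the spatial separation by $O(p^{1/2}s/M)$, whereas (c) offers only the smaller power $p^\alpha$ with $\alpha<1/2$; the deficit is absorbed exactly by the smallness $\textrm{diam}(K)\le cD$ together with the rigid coupling $n_j\asymp M^2\gamma$ between the time scale and the number of cells — and it is the competition between (c) on one side and (b), (d), (e) on the other that forces both $\alpha<1/2$ and $\beta>2\alpha$.
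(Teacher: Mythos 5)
Your construction is correct: all five properties check out. Choosing $\lambda_j$ as the lower-left corner of the grid cell makes (b) immediate once $n_{q-1}=M^2\gamma\le(\tau M/s)^{1/\alpha}$; the arithmetic progression reduces (d) and (e) to the single requirement $\gamma\ge A(\eta,M)$; and in (c) the worst case $p\approx M^2$, $l+1\approx M^2$ produces exactly the cancellation $M^{1-2\alpha}$ on both sides, so the condition collapses to $s\le D/(C_H+2)$, i.e.\ to an absolute constant $c$ as the lemma demands -- you correctly identified this as the delicate point, and the window for $\gamma$ is indeed nonempty for large $M=2^m$ precisely because $\alpha<1/2$ and $\beta>2\alpha$. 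Be aware, though, that the paper itself does not prove this lemma: it is quoted from \cite{BCM21}, and what this paper calls ``the dyadic covering'' refers to that source's construction, which is organized around the recursive (dyadic) subdivision of the square rather than around an explicit single-scale grid. Your route replaces that machinery by one uniform $M\times M$ grid enumerated along the Hilbert curve, with the times forced into an arithmetic progression of common step $\gamma$, so that every constraint of the lemma becomes a one-parameter window condition on $\gamma$; this is arguably more transparent and generalizes painlessly to $d$ dimensions via a $1/d$-H\"older space-filling curve, at the cost of importing the H\"older estimate for the Hilbert curve as a black box. Morally the two mechanisms coincide -- the Hilbert curve is itself the recursive four-quadrant ordering, and both arguments live off the same competition of exponents ($1/2-\alpha>0$ for the spatial estimate, $\beta>2\alpha$ for the summability) -- but your packaging of the bookkeeping through the single free integer $\gamma$ is a genuinely cleaner presentation than a scale-by-scale construction.
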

This covering works well for families of weights $\big(w_n(a)\big)_{a}$ such that $a\mapsto \sum_{k=1}^n\log\big(w_n(a)\big)$ is $F(n)$-Lipschitz for some function $F:\NN\to\NN$ satisfying $F(n)\leq C n^\alpha$, $C>0$, $0<\alpha<1/d$, giving common hypercyclic vectors for the family of products of $d$ backward shifts. 
As usual, it is not very difficult to transfer these results to the context of hypercyclic algebras for the coordinatewise product, what lead us to the formulation of Theorem \ref{thm:unifcrit}. As we have discussed, the convolution product is more delicate to work with.

\begin{proof}[Proof of Theorem \ref{thm:unifcrit}]
For simplicity we prove the result for $d=2$, but it is clear that same can be done for any $d\in\NN$. Also, in order to shrink the notations, we will write $\widehat{w}_n(x)=w_1(x)\cdots w_n(x)$ for any $x$ and $n$. We aim to apply the Basic Criterion for algebras with $\Lambda=(0,+\infty)^d$ and some compact square $K\subset I_0^d$ where $I_0\subset (0,+\infty)$ is a compact interval. Let $m'\leq m''$ in $\NN$ and $(u,v)\in\D$ with support in $[0,p]$ for some $p\in\NN$, say $u=\sum_{l=0}^pu_le_l$ and $v=\sum_{l=0}^pv_le_l$. By hypothesis we find $C_1,C_2>0$, $\beta>\alpha d$, $F:\NN\to (0,+\infty)$  and $N_0,M_0>0$ as in the statement. Let us fix $D<C_2/2^\alpha$ and define \[A_p:=\sup\big\{\big[\widehat{w}_l(x)\|(u,v)\|_\infty\big]^{m/m'} : m\in[m',m'']\cup\{1\}, l=0,...,p\text{ and }x\in I_0\big\},\]
\[A_p':=\inf\{\widehat{w}_l(x) : l=0,...,p\text{ and }x\in I_0\}.\]We can suppose that $\textrm{diam}(K)\leq cD$, otherwise we subdivide it into smaller squares. In order to verify the basic criterion, let $N>0$ and $\veps>0$ arbitrary. Let $\tau_0>0$ small enough so that
\[\max_{l=0,...,p}\big(\|\tau_0 u_l e_l\|,\|\tau_0 v_l e_l\|\big)\leq\frac{\veps}{2(p+1)}\]
and let $\tau>0$ small enough so that
\[\Big|\exp\big(C_12^\alpha\tau\big)-1\Big|\leq \frac{\tau_0}{A_p/A_p'+\|(u)\|_\infty}.\]
Finally let us fix $M>\max\{N_0,N,p\}$ big enough so that, for all $n\geq M$, $l=0,...,p$, $m\in[m',m'']$ and $a\in I_0$, we have $\widehat{w}_{l+n}(a)\geq 1$ and \[\Bigg\|\frac{1}{\big(\widehat{w}_{l+n}(a)\big)^{1/m'}}e_l\Bigg\|\leq \frac{\veps}{2(p+1)\big((A_p/A_p')\exp(\tau C_1 2^\alpha)+1\big)}.\] We then apply Lemma \ref{lem:coveringF} for $\tau>0$, for 
\[\eta=\frac{\veps}{2M_0(p+1)(A_p+1)}\]
and for $M$ and find $q\geq 1$, integers $n_0,...,n_{q-1}$, parameters $\lambda_0,...,\lambda_{q-1}$ and squares $\Lambda_0,...,\Lambda_{q-1}$ such that properties (a)-(e) hold true. We claim that conditions (I)-(IV) of Basic Criterion are satisfied.

Condition (I) is automatically true. For condition (II), let $\lambda\in\Lambda_i$ for some $i\in\{0,...,q\}$, and let $m\in[m',m'']$. We have
\begin{align*}
    \Bigg\|\sum_{j=0}^{q-1}S_{\lambda_j,m'}^{n_j}(u,v)\Bigg\|
    &=\Bigg\|\sum_{j=0}^{q-1}F_{w(x_j)^{-1/m'}}^{n_j}(u^{1/m'})\Bigg\| + \Bigg\|\sum_{j=0}^{q-1}F_{w(y_j)^{-1/m'}}^{n_j}(v^{1/m'})\Bigg\|.
\end{align*}
Let us treat each parcel separately. Firstly we apply (\ref{cond2:theo:simple}) and get
\begin{align*}
    \Bigg\|\sum_{j=0}^{q-1}F_{w(x_j)^{-1/m'}}^{n_j}(u^{1/m'})\Bigg\|
    &= \Bigg\|\sum_{j=0}^{q-1}\sum_{l=0}^p \frac{u_l^{1/m'}}{\big[w_{l+1}(x_j)\cdots w_{l+n_j}(x_j)\big]^{1/m'}}e_{l+n_j} \Bigg\| \\
    &\leq \sum_{j=0}^{q-1}\sum_{l=0}^p\big(|\widehat{w}_l(x_j)u_l|^{1/m'}+1\big)\Bigg\| \frac{1}{\big[\widehat{w}_{l+n_j}(x_j)\big]^{1/m'}}e_{l+n_j}\Bigg\| \\
    &\leq \sum_{j=0}^{q-1}\sum_{l=0}^p(A_p+1)\frac{M_0}{(l+n_j)^\beta} \\
    &\leq \sum_{j=0}^{q-1}\frac{(p+1)(A_p+1)M_0}{n_j^\beta} \\
    &\leq (p+1)(A_p+1)M_0\eta \\
    &\leq \frac{\veps}{2}.
\end{align*}
Analogously we get
$\big\|\sum_{j=0}^{q-1}F_{w(y_j)^{-1/m'}}^{n_j}(v^{1/m'})\big\|\leq \frac{\veps}{2},$
what proves the first part of condition (II). For the second, writing $\lambda=(x,y)$, we have
\begin{align*}
\Bigg\|\sum_{j\ne i} T_{\lambda}^{n_i}\Big(S_{\lambda_j,m'}^{n_j}\big((u,v)^{1/m'}\big)\Big)^m\Bigg\|&\leq\Bigg\|\sum_{j\ne i} B_{w(x)}^{n_i}\Big(F_{w(x_j)^{-1/m'}}^{n_j}\big(u^{1/m'}\big)\Big)^m\Bigg\|\\ &\quad\quad\quad\quad\quad+\Bigg\|\sum_{j\ne i} B_{w(y)}^{n_i}\Big(F_{w(y_j)^{-1/m'}}^{n_j}\big(v^{1/m'}\big)\Big)^m\Bigg\|.
\end{align*}
As before, we treat these parcels separately and get
\begin{align*}
    \Bigg\|\sum_{j\ne i} B_{w(x)}^{n_i}\Big(F_{w(x_j)^{-1/m'}}^{n_j}&\big(u^{1/m'}\big)\Big)^m\Bigg\| \\
    &= \Bigg\|\sum_{j>i} B_{w(x)}^{n_i}\Big(F_{w(x_j)^{-1/m'}}^{n_j}\big(u^{1/m'}\big)\Big)^m\Bigg\|\\
    &= \Bigg\|\sum_{j> i} \sum_{l=0}^pu_l^{m/m'}\frac{w_{l+n_j-n_i+1}(x)\cdots w_{l+n_j}(x)}{\big[w_{l+1}(x_j)\cdots w_{l+n_j}(x_j)\big]^{m/m'}}e_{l+n_j-n_i}\Bigg\|\\
    &\leq \sum_{j> i} \sum_{l=0}^p\Bigg\|u_l^{m/m'}\frac{w_{l+n_j-n_i+1}(x)\cdots w_{l+n_j}(x)}{\big[w_{l+1}(x_j)\cdots w_{l+n_j}(x_j)\big]^{m/m'}}e_{l+n_j-n_i}\Bigg\|.
\end{align*}
Notice that, using $m/m'\geq 1$ and $D<C_2/2^\alpha$,
\begin{align*}
    &\Bigg|u_l^{m/m'}\frac{w_{l+n_j-n_i+1}(x)\cdots w_{l+n_j}(x)}{\big[w_{l+1}(x_j)\cdots w_{l+n_j}(x_j)\big]^{m/m'}} \Bigg|\\*
    &\quad\quad\quad\quad\quad\quad\leq |\widehat{w}_l(x_j)u_l|^{m/m'}
    \frac{\widehat{w}_{l+n_j}(x)}{\widehat{w}_{l+n_j}(x_j)}\times \frac{1}{\widehat{w}_{l+n_j-n_i}(x)}  \\
    &\quad\quad\quad\quad\quad\quad\leq A_p \exp\Bigg(\Bigg|\sum_{i=1}^{l+n_j}\log(w_i(x))-\sum_{i=1}^{l+n_j}\log(w_i(x_j))\Bigg|\Bigg)\times \frac{1}{\widehat{w}_{l+n_j-n_i}(x)}\\
    &\quad\quad\quad\quad\quad\quad\leq  A_p \exp \big(F(l+n_j)|x-x_j|\big)\times \frac{1}{\widehat{w}_{l+n_j-n_i}(x)}\\
    &\quad\quad\quad\quad\quad\quad\leq  A_p\exp \Bigg(D\frac{F(l+n_j)}{n_j^\alpha}(n_j-n_i)^\alpha\Bigg) \times \frac{1}{\widehat{w}_{l+n_j-n_i}(x)}\\
    &\quad\quad\quad\quad\quad\quad\leq  A_p \frac{\exp\Big(D\frac{F(l+n_j)}{(l+n_j)^\alpha}(l+n_j-n_i)^\alpha\times\frac{(l+n_j)^\alpha}{n_j^\alpha}\times\frac{(n_j-n_i)^\alpha}{(l+n_j-n_i)^\alpha}\Big)}{\widehat{w}_{l+n_j-n_i}(x)}\\
    &\quad\quad\quad\quad\quad\quad\leq  A_p \frac{\exp\Big(D2^\alpha \frac{F(l+n_j)}{(l+n_j)^\alpha}(l+n_j-n_i)^\alpha\Big)}{\widehat{w}_{l+n_j-n_i}(x)}\\
    &\quad\quad\quad\quad\quad\quad\leq  A_p \frac{\exp\Big(C_2 \frac{F(l+n_j)}{(l+n_j)^\alpha}(l+n_j-n_i)^\alpha\Big)}{\widehat{w}_{l+n_j-n_i}(x)}.
\end{align*}
Hence,
\begin{align*}
    \Bigg\|\sum_{j\ne i} B_{w(x)}^{n_i}\Big(F_{w(x_j)^{-1/m'}}^{n_j}&\big(u^{1/m'}\big)\Big)^m\Bigg\|\\ 
    &\leq \sum_{j> i} \sum_{l=0}^p\Bigg\|u_l^{m/m'}\frac{w_{l+n_j-n_i+1}(x)\cdots w_{l+n_j}(x)}{\big[w_{l+1}(x_j)\cdots w_{l+n_j}(x_j)\big]^{m/m'}}e_{l+n_j-n_i}\Bigg\|\\
    &\leq \sum_{j> i} \sum_{l=0}^p\Bigg\|A_p \frac{\exp\Big(C_2 \frac{F(l+n_j)}{(l+n_j)^\alpha}(l+n_j-n_i)^\alpha\Big)}{\widehat{w}_{l+n_j-n_i}(x)} e_{l+n_j-n_i}\Bigg\|\\
    &\leq \sum_{j> i} \sum_{l=0}^p\big(A_p+1\big)\Bigg\| \frac{\exp\Big(C_2 \frac{F(l+n_j)}{(l+n_j)^\alpha}(l+n_j-n_i)^\alpha\Big)}{\widehat{w}_{l+n_j-n_i}(x)} e_{l+n_j-n_i}\Bigg\|\\
    &\leq \sum_{j> i} \sum_{l=0}^p\big(A_p+1\big) \frac{M_0}{(l+n_j-n_i)^\beta}\\
    &\leq M_0(p+1)\big( A_p+1\big)\sum_{j>i}\frac{1}{(n_j-n_i)^\beta}\\
    &\leq M_0(p+1)\big(A_p+1\big)\eta\\
    &\leq \frac{\veps}{2}.
\end{align*}
Analogously, $\Bigg\|\sum_{j\ne i} B_{w(y)}^{n_i}\Big(F_{w(y_j)^{-1/m'}}^{n_j}\big(v^{1/m'}\big)\Big)^m\Bigg\|\leq \frac{\veps}{2}$, what shows that condition (II) is satisfied. For condition (III) and (IV), let $\lambda\in\Lambda_i$ for some $i\in\{0,...,q-1\}$. We write $\lambda=(x,y)$. Notice that, for all $l=0,...,p$ and $(z,z_i)\in\big\{(x,x_i), (y,y_i)\big\},$
\begin{align*}
    \frac{\widehat{w}_{l+n_i}(z)}{\widehat{w}_{l+n_i}(z_i)} 
    &\leq \exp\Bigg(\Bigg|\sum_{j=1}^{l+n_i}\log\big(w_j(z)\big)-\sum_{j=1}^{l+n_i}\log\big(w_j(z_i)\big)\Bigg|\Bigg)\\
    &\leq \exp\big(F(l+n_i)|z-z_i|\big)\\
    &\leq \exp\Big(\tau\frac{F(l+n_i)}{n_i^\alpha}\Big)\\
    &\leq \exp\Big(\tau C_1\frac{(l+n_i)^\alpha}{n_i^\alpha}\Big)\\
    &\leq \exp\big(\tau C_1 2^\alpha\big).
\end{align*}
Given $m\in(m',m'']$, we have
\begin{align*}
    \Bigg\|B_{w(x)}^{n_i}\Big(F_{w(x_i)^{-1/m'}}^{n_i}\big(u^{1/m'}\big)\Big)^m\Bigg\| 
    &\!=\! \Bigg\|\sum_{l=0}^pu_l^{m/m'}\frac{w_{l+1}(x)\cdots w_{l+n_i}(x)}{\big[w_{l+1}(x_i)\cdots w_{l+n_i}(x_i)\big]^{\frac{m}{m'}}}e_{l}\Bigg\|\\
    &\!\leq\! \sum_{l=0}^p\Bigg\|\frac{\big(\widehat{w}_l(x_i)u_l\big)^{m/m'}}{\widehat{w}_{l}(x)}\!\times\!\frac{\widehat{w}_{l+n_i}(x)}{\widehat{w}_{l+n_i}(x_i)}\!\times\! \frac{1}{\big[\widehat{w}_{l+n_i}(x_i)\big]^{\frac{m}{m'}-1}} e_{l}\Bigg\|\\
    &\!\leq\! \sum_{l=0}^p\Bigg\|(A_p/A_p')\exp(\tau C_1 2^\alpha) \frac{1}{\big[\widehat{w}_{l+n_i}(x_i)\big]^{\frac{m}{m'}-1}} e_{l}\Bigg\|\\
    &\!\leq\! \big((A_p/A_p')\exp(\tau C_1 2^\alpha)+1\big)\sum_{l=0}^p\Bigg\|\frac{1}{\big[\widehat{w}_{l+n_i}(x_i)\big]^{1/m'}} e_{l}\Bigg\|\\
    &\!\leq\! \frac{\veps}{2}.
\end{align*}
Analogously, $\Bigg\|B_{w(y)}^{n_i}\Big(F_{w(y_i)^{-1/m'}}^{n_i}\big(v^{1/m'}\big)\Big)^m\Bigg\| \leq \frac{\veps}{2}$, what verifies condition (III). Finally, for condition (IV), we first notice that, for any $x,y$ with $|x-y|<\frac{\tau}{n^\alpha}$, we have
\begin{align*}
    \Bigg|\!\Bigg(\!\frac{w_{l+1}(x)\cdots w_{l+n}(x)}{w_{l+1}(y)\cdots w_{l+n}(y)}-1\!\!\Bigg)u_l\Bigg|
    &\!=\!\Bigg|\!\Bigg(\frac{\widehat{w}_{l}(y)}{\widehat{w}_{l}(x)}\!\times\! \frac{\widehat{w}_{l+n}(x)}{\widehat{w}_{l+n}(y)}\!-\!\frac{\widehat{w}_{l}(y)}{\widehat{w}_{l}(x)}\!+\!\frac{\widehat{w}_{l}(y)}{\widehat{w}_{l}(x)}\!-\!1\!\!\Bigg)u_l\Bigg| \\
    &\!\leq\! \Bigg|\frac{\widehat{w}_{l}(y)}{\widehat{w}_{l}(x)}\Bigg|\|u\|_\infty\!\times\!\Bigg|\frac{\widehat{w}_{l+n}(x)}{\widehat{w}_{l+n}(y)}\!-\!1\Bigg|\!+\!\Bigg|\frac{\widehat{w}_{l}(y)}{\widehat{w}_{l}(x)}\!-\!1\Bigg|\|u\|_\infty  \\
    &\!\leq\! \frac{A_p}{A_p'} \Big|\exp\!\big(F(n\!+\!l)|x\!-\!y|\big)\!-\!1\Big|\!+\!\Big|\exp\!\big(F(l)|x\!-\!y|\big)\!-\!1\Big|\|u\|_\infty\\
    &\!\leq\! \frac{A_p}{A_p'} \Big|\exp\!\big(F(n+l)\frac{\tau}{n^\alpha}\big)-1\Big|+\Big|\exp\!\big(F(l)\frac{\tau}{n^\alpha}\big)-1\Big|\|u\|_\infty\\
    &\!\leq\! \frac{A_p}{A_p'} \Big|\exp\!\big(C_1\tau\frac{(n+p)^\alpha}{n^\alpha}\big)-1\Big|+\Big|\exp\!\big(C_1\tau\frac{p^\alpha}{n^\alpha}\big)-1\Big|\|u\|_\infty\\
    &\!\leq\! \frac{A_p}{A_p'} \Big|\exp\!\big(C_12^\alpha\tau\big)-1\Big|+\Big|\exp\!\big(C_1\tau\big)-1\Big|\|u\|_\infty\\
    &\!\leq\! (A_p/A_p'+\|u\|_\infty) \Big|\exp\!\big(C_12^\alpha\tau\big)-1\Big|\\
    &\!\leq\! \tau_0
\end{align*}
Hence,
\begin{align*}
    &\bigg\|T_{\lambda}^{n_i}\Big(S_{\lambda_i, m'}^{n_i}\big(v^{1/m'}\big)\Big)^{m'}-v\bigg\| \\
    &\quad\quad\leq 
    \bigg\|B_{w(x)}^{n_i}\Big(F_{w(x_i)^{-1/ m'}}^{n_i}\big(u^{1/m'}\big)\Big)^{m'}-u\bigg\| + \bigg\|B_{w(y)}^{n_i}\Big(F_{w(y_i)^{-1/ m'}}^{n_i}\big(v^{1/m'}\big)\Big)^{m'}-v\bigg\| \\
    &\quad\quad= \Bigg\|\sum_{l=0}^p\Bigg(\frac{w_{l+1}(x)\cdots w_{l+n_i}(x)}{w_{l+1}(x_i)\cdots w_{l+n_i}(x_i)}-1\Bigg)u
    _le_l\Bigg\| + \Bigg\|\sum_{l=0}^p\Bigg(\frac{w_{l+1}(y)\cdots w_{l+n_i}(y)}{w_{l+1}(y_i)\cdots w_{l+n_i}(y_i)}-1\Bigg)v
    _le_l\Bigg\|\\
    &\quad\quad\leq \sum_{l=0}^p \Big(\big\|\tau_0 u_le_l\big\|+\big\|\tau_0 v_l e_l\big\|\Big)\\
    &\quad\quad\leq \veps.
\end{align*}
This proves the claim and completes the proof.
\end{proof}

{\color{red}
}


\subsection{Characterization of families of products of weighted shifts admitting a
common hypercyclic algebra for the coordinatewise product}

As done in \cite{BCP1}, many results for vectors can be brought to the context of algebras with the coordinatewise product under almost the same conditions, the difference being the presence of some natural powers. Here we repeat this formula with \cite[Theorem 2.1]{BCM21} and obtain Theorem \ref{thm:carac}. For the proof we need the following property which holds true whenever $(e_n)_n$ is an unconditional basis of a Fréchet space $X$.
\begin{enumerate}\label{UNC}
\item[(UB)\!\!] If $(x_n)\in X$ and $(\alpha_n)\in\ell_\infty$, then $(\alpha_n x_n)\in X$. Moreover, for all $\veps>0$, for all  $M>0$, there exists $\delta>0$ such that, 
for all $x\in X$ with $\|x\|\leq \delta$, for all sequence $(\alpha_n)\in\ell_\infty$ with $\|(\alpha_n)\|_\infty\leq M$, then $(\alpha_n x_n)\in X$ and $\|(\alpha_n x_n)\|<\veps$.
\end{enumerate}

\begin{theorem}\label{thm:carac}\index{characterization!of common hypercyclic algebras}
Let $X$ be a Fréchet sequence algebra for the coordinatewise product with a continuous norm and admitting $(e_n)$ as an unconditional basis. Let $I\subset\mathbb R$ be a nonempty interval, let $d\geq 1$ and let $\Lambda\subset I^d$ be $\sigma$-compact. Let $(B_{w(a)})_{a\in I}$ be a family of weighted shifts on $X$ and assume that $a\in I\mapsto w_n(a)$ is non-decreasing with $\displaystyle\inf_{n\in\NN, a\in I} w_n(a)>0$. Also, assume that there exist $F:\mathbb N\to\RR_+$ non-decreasing and $c,C>0$ such that, for all $n\geq 1$, writing $f_n(a)=\sum_{k=1}^n \log(w_k(a))$, 
\[ \forall (a,b)\in I^2,\ cF(n) |a-b|\leq |f_n(a)-f_n(b)|\leq CF(n) |b-a|, \]
\[ \forall (a,b)\in I^2,\ \frac{w_n(a)}{w_n(b)}\geq c. \]
Then the following assertions are equivalent:
\begin{enumerate}[(a)]
\item $(B_{w(\lambda(1))}\times\cdots \times B_{w(\lambda(d))})_{\lambda\in\Lambda}$ admits a common hypercyclic algebra in $X$;
\item For all $m\geq 1$, there exist $u\in X^d$ such that $u^m$ is a common hypercyclic vector for $(B_{w(\lambda(1))}\times\cdots \times B_{w(\lambda(d))})_{\lambda\in\Lambda}$;
\item For all $m\in\NN$, $\tau>0$, for all $N\geq 1$, for all $\veps>0$, for all $K\subset\Lambda$ compact, there exist
$N\leq n_1<n_1+N\leq n_1<\dots<n_{q-1}+N\leq n_{q}$ and $(\lambda_k)_{k=1,\dots,q}\in I^d$ such that
\begin{enumerate}[(i)]
\item $K\subset \cup_{k=1}^q \prod_{l=1}^d \left[\lambda_k(l)-\frac\tau{F(n_k)},\lambda_k(l)\right]$
\item For all $i=1,\dots,d$,
\[ \left\|\sum_{k=1}^q\frac{1}{\big[w_1(\lambda_k(i))\cdots w_{n_k}(\lambda_k(i))\big]^{1/m}}e_{n_k}\right\|<\veps. \]
\item For all $k=1,\dots,q$, for all $i=1,\dots,d$, for all $l=0,...,N$,
\[ \left\|\sum_{j=k+1}^{q}\frac{w_{n_j-n_k+l+1}(\lambda_k(i))\cdots w_{n_j+l}(\lambda_k(i))}{w_{l+1}(\lambda_j(i))\cdots
w_{n_j+l}(\lambda_j(i))}e_{n_j-n_k}\right\|<\veps. \]
\end{enumerate}
\end{enumerate}
\end{theorem}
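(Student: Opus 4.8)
The plan is to establish the cycle of implications $(a)\Rightarrow(b)\Rightarrow(c)\Rightarrow(a)$, following the strategy used for the analogous characterization of common hypercyclic \emph{vectors} in \cite[Theorem 2.1]{BCM21} but tracking the extra powers introduced by the algebra structure, exactly as in the passage from \cite{BCP1}.

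\medskip

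\emph{The implication $(a)\Rightarrow(b)$.} Suppose $A\subset X^d$ is a common hypercyclic algebra. Fix $m\geq 1$. For any fixed nonzero $u\in A$, the element $u^m$ lies in $A$ again, hence $u^m$ is a common hypercyclic vector (every nonzero element of $A$ is). This is immediate and requires no hypothesis beyond $A$ being an algebra inside $\bigcap_\lambda HC(T_\lambda)\cup\{0\}$.

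\medskip

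\emph{The implication $(b)\Rightarrow(c)$.} Fix $m$, $\tau>0$, $N\geq 1$, $\veps>0$ and $K\subset\Lambda$ compact. By $(b)$ pick $u\in X^d$ with $u^m$ common hypercyclic; in particular $u^m$ is hypercyclic for each $T_\lambda$, $\lambda\in K$. The idea is to run the standard ``from a dense orbit to a finite covering'' argument: approximate a fixed vector close to $e_0^{(d)}$ (or to each coordinate $e_0$) by iterates $T_\lambda^{n}(u^m)$, use the explicit action of $B_{w(a)}^n$ on monomials together with the two-sided Lipschitz estimate $cF(n)|a-b|\leq|f_n(a)-f_n(b)|\leq CF(n)|b-a|$ to convert the open cover of $K$ by basic neighbourhoods into a finite subcover of the prescribed shape $\prod_l[\lambda_k(l)-\tau/F(n_k),\lambda_k(l)]$, and then a compactness/diagonal extraction to separate the indices $n_1<\cdots<n_q$ with gaps $\geq N$ (possible because $F(n)\to\infty$, forced by the lower bound $cF(n)|a-b|\le|f_n(a)-f_n(b)|$ on a nondegenerate interval together with $\inf w_n(a)>0$). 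Conditions (ii) and (iii) are read off from the fact that the ``tail'' and ``overlap'' terms of $\sum_k T_{\lambda_k}^{n_k}(u^m)$ must be small; here the unconditional basis property (UB) and the continuous norm are what let us discard the bounded factors $(\widehat w_l(\cdot)u_l)^{\text{power}}$ and reduce everything to the clean expressions in (ii), (iii) with exponent $1/m$ in place of the full monomial weight (the ratio bound $w_n(a)/w_n(b)\ge c$ controls the parameter-dependence uniformly).

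\medskip

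\emph{The implication $(c)\Rightarrow(a)$.} This is where the Basic Criterion for algebras, Theorem \ref{basic:alg}, enters. Given a compact $K\subset\Lambda$, $m'\leq m''$, $v\in\D$, $N$ and $\veps$, apply $(c)$ with a suitable $m$ (one should take $m=m'$, or rather handle all $m\in[m',m'']$ at once) and a small enough $\tau$ to produce the data $q$, $(n_k)$, $(\lambda_k)$, and set $\Lambda_k:=K\cap\prod_l[\lambda_k(l)-\tau/F(n_k),\lambda_k(l)]$. Then one verifies (I)--(IV): (I) is the covering (i); (IV) is the ``return'' estimate, which follows by writing $B_{w(x)}^{n_i}(F_{w(x_i)^{-1/m'}}^{n_i}(u^{1/m'}))^{m'}-u$ as $\sum_l(\widehat w_{l+n_i}(x)/\widehat w_{l+n_i}(x_i)-1)u_le_l$ and bounding $\widehat w_{l+n_i}(x)/\widehat w_{l+n_i}(x_i)-1$ via $\exp(F(l+n_i)\tau/F(n_i))-1$, using that $F$ is non-decreasing so the ratio $F(l+n_i)/F(n_i)$ stays bounded for $l\le p$; (III) is the ``higher power kills'' estimate, where the surplus exponent $m/m'-1\geq 1/m'$ on $\widehat w_{l+n_i}(x_i)$ combines with condition (ii) (applied with $m=m'$) and the bounded factors handled by (UB); (II) is the ``off-diagonal'' estimate, the first part again from (ii) and the second part precisely from (iii), after absorbing bounded monomial factors and parameter ratios into $(A_p+1)$-type constants using (UB) and $w_n(a)/w_n(b)\ge c$. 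The computations are cosmetically close to those already carried out in the proof of Theorem \ref{thm:unifcrit} above, with $F(n)$ playing the role of $n^\alpha$ and no dyadic covering needed since the covering is handed to us by (c).

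\medskip

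The main obstacle is the implication $(b)\Rightarrow(c)$: turning the \emph{qualitative} statement ``$u^m$ has dense orbit'' into the \emph{quantitative, uniform-over-$K$} statement (c) requires the two-sided Lipschitz hypothesis on $f_n$ in an essential way (the lower bound $cF(n)|a-b|$ is what makes the neighbourhoods $[\lambda_k(l)-\tau/F(n_k),\lambda_k(l)]$ the right shape and forces $F(n)\to\infty$), and the unconditional-basis property (UB) together with the continuous norm is what allows one to strip the monomial weights $\widehat w_l$ and the $m$-th power down to the bare quantities appearing in (ii) and (iii) — without a continuous norm the sequence $(e_n)$ need not be bounded below and condition (ii) of Theorem \ref{thm:unifcrit}'s flavor could fail. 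Getting the index separation $n_k+N\leq n_{k+1}$ simultaneously with the covering also needs a short compactness argument that I would spell out carefully; everything else is bookkeeping.
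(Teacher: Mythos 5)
Your overall route is the paper's route: $(a)\Rightarrow(b)$ is trivial, $(b)\Rightarrow(c)$ is done as in the vector characterization of \cite{BCM21}, and $(c)\Rightarrow(a)$ goes through the Basic Criterion for algebras with $\Lambda_k=K\cap\prod_l[\lambda_k(l)-\tau/F(n_k),\lambda_k(l)]$, condition (ii) giving the first half of (II) and (III), condition (iii) giving the second half of (II), and the upper Lipschitz bound giving (IV), with (UB), the continuous norm and $w_n(a)/w_n(b)\geq c$ used to absorb bounded correction factors. However, two of your steps, as stated, would not go through.

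First, in $(b)\Rightarrow(c)$ the gaps $n_{k+1}-n_k\geq N$ cannot be produced by ``compactness/diagonal extraction'': each $n_k$ comes attached to the piece $\Lambda_k=\{\lambda\in K:\|T_\lambda^{n_k}u^m-v\|<\delta\}$ of the covering, so discarding indices to enforce separation destroys property (i). Nor is $F(n)\to\infty$ forced by the hypotheses (nor needed). The actual mechanism — in \cite{BCM21} and in the argument this paper refers to — is to take as target not $e_0$ but $v=(e(N),\dots,e(N))$ with $e(N)=\sum_{k=0}^N e_k$: approximating a vector whose support has length $N$ forces $n_1\geq N$ and, by a contradiction argument comparing the coordinates $n_k+N$ and $n_k+N+1$ of $T_{\lambda_k}^{n_k}u^m$ and $T_{\lambda_{k+1}}^{n_{k+1}}u^m$ and using $w_n(a)/w_n(b)\geq c$ together with the monotonicity of $a\mapsto w_n(a)$, it forces $n_{k+1}-n_k\geq N$ automatically; the same choice of target is what yields (iii) for all $l=0,\dots,N$, and the lower bound $cF(n)|a-b|\leq|f_n(a)-f_n(b)|$ then gives the cube size $\tau/F(n_k)$ in (i). Second, in your verification of (IV) (and implicitly (II)--(III)) you shift by $n_i$ and bound the weight-ratio by $\exp(F(l+n_i)\tau/F(n_i))-1$, claiming $F(l+n_i)/F(n_i)$ is bounded because $F$ is non-decreasing; monotonicity gives no upper bound on that ratio. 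The paper avoids this by choosing the shift exponents $m_k=n_k-N$, so that every weight product occurring in (II)--(IV) stops at an index $\leq n_k$ (resp.\ $\leq n_j$), and then $F(\cdot)\leq F(n_k)$ yields the clean bound $C\tau$; this choice is also what makes the expressions match conditions (ii) and (iii) up to bounded factors $(x_{k,l})$, $(y_{j,l})$, $(\alpha_{j,l})$, $(z_{k,l})$ handled by (UB). With shift exponent $n_i$ both the estimate and the index bookkeeping against (iii) are off, so this point needs the paper's normalization (or an extra hypothesis on $F$) to be repaired.
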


\begin{proof} To fix ideas, we will prove the case $d=2$. The modifications needed for obtaining the general case are straightforward, so we let them for the reader. The notations used are as follows. Each elements of $I^2$ will be written as a couple $\lambda=(a,b)$ and will induce the operators $T_\lambda=B_{w(a)}\times B_{w(b)}$ and its right inverse $S_\lambda=F_{w^{-1}(a)}\times F_{w^{-1}(b)}$ (here, $F_{w^{-1}(a)}$ is the forward shift induced by $(w_n^{-1}(a))_n$).


The proof of $(a)\Rightarrow(b)$ is trivial. The proof of $(b)\Rightarrow (c)$ is very similar to what is done in \cite{BCM21}, we leave the details to the reader.Let us show that $(c)\Rightarrow(a)$. We aim to apply the Basic Criterion for algebras, so let $\mathcal D\subset X^2$ be the set of couples of vectors with finite support, let $K\subset \Lambda$ be compact, let $m'\leq m''$ and let $v\in\mathcal D$ and $N\in\NN$, say $v=(v(1),v(2))$ with $v(i)=\sum_{l=0}^Nv_l(i)e_l, i=1,2$. Finally we let $\veps>0$, we fix a small $\tau>0$ (conditions later on) and we apply (c) with $m=m'$ in order to obtain sequences $(n_k)_{k=1,\dots,q}$ and $(\lambda_k)_{k=1,\dots,q}$ satisfying (i), (ii) and (iii). We write $\lambda_k=(a_k,b_k)$ and set $\Lambda_k=K\cap ([a_k-\tau/F(n_k);a_k]\times [b_k-\tau/F(n_k);b_k])$ so that $\bigcup_k\Lambda_k\supset K$. We claim that all conditions of the Basic Criterion for algebras are satisfied for $m_k=n_k-N$. Condition (I) is already verified. For later use, we observe that the condition $\inf_n w_n(\tilde a)$ and the unconditionality of $(e_n)_n$ ensures the continuity of $B_{w(\tilde a)^{1/m'}}$. For condition (II), we start by verifying that 
\begin{align*}
    \Bigg\|\sum_{k=0}^{q}S_{\lambda_k,m'}^{m_k}v\Bigg\|
    &=\Bigg\|\sum_{k=0}^{q}F_{w(a_k)^{-1/m'}}^{m_k}(v(1)^{1/m'})\Bigg\| + \Bigg\|\sum_{k=0}^{q}F_{w(b_k)^{-1/m'}}^{m_k}(v(2)^{1/m'})\Bigg\|
\end{align*}
is smaller than $\veps$. For $i=1,2$ we have
\begin{align*}
&\left\|\sum_{k=1}^q F_{w(a_k)^{-1/m'}}^{m_k}(v(i)^{1/m'})\right\| \leq \\ 
&\quad \quad\leq \sum_{l=0}^{N} (|v_l(i)|^{1/m'}+1)\left\|\sum_{k=1}^q \frac{1}{\big[w_{l+1}(a_k)\cdots w_{n_k-(N-l)}(a_k)\big]^{1/m'}}e_{n_k-(N-l)}\right\|.
\end{align*}
Fixing $\tilde a\in I$, we see that 
\begin{align}\label{proof:eqq}\begin{split}
    &\sum_{k=1}^q \frac{1}{\big[w_{l+1}(a_k)\cdots w_{n_k-(N-l)}(a_k)\big]^{1/m'}}e_{n_k-(N-l)}\\ &\quad\quad\quad\quad =
B_{w(\tilde a)^{1/m'}}^{N-l}\left(\sum_{k=1}^q \frac{x_{k,l}}{\big[w_{1}(a_k)\cdots w_{n_k}(a_k)\big]^{1/m'}}e_{n_k}\right)
\end{split}
\end{align}
for some sequence $(x_{k,l})_k\in\ell_\infty$ with 
$\|(x_{k,l})_k\|_\infty\leq \left(\frac{M}c\right)^{N/m'}$,
where \[M=\max\big(\{1\}\cup\{|w_j(a)|:0\leq j\leq N, (a,b)\in K\textrm{ for some }b\}\big).\] Effectively,
\begin{align*}
\sum_{k=1}^q &\frac1{\big[w_{l+1}(a_k)\cdots w_{n_k-(N-l)}(a_k)\big]^{1/m'}}e_{n_k-(N-l)}\\
&\quad\quad\quad\quad=\sum_{k=1}^q \big[w_1(a_k)\cdots w_l(a_k)\big]^{1/m'}\times \left[\prod_{j=n_k-(N-l)+1}^{n_k}\frac{w_j(a_k)}{w_j(\tilde a)}\right]^{1/m'} \\ & \quad\quad\quad\quad\quad\quad\quad\quad\quad\quad\quad\quad\quad\quad\quad\times B_{w(\tilde a)^{1/m'}}^{N-l}\left(\frac{1}{\big[w_{1}(a_k)\cdots w_{n_k}(a_k)\big]^{1/m'}}e_{n_k}\right)\\
&\quad\quad\quad\quad\leq \sum_{k=1}^q M^{N/m'} \left(\frac{1}{c}\right)^{(N-l)/m'}B_{w(\tilde a)^{1/m'}}^{N-l}\left(\frac{1}{\big[w_{1}(a_k)\cdots w_{n_k}(a_k)\big]^{1/m'}}e_{n_k}\right).
\end{align*}
Hence, the first condition in \hyperref[basic:alg]{(II)} follows from the continuity of $B_{w(\tilde a)^{1/m'}}$ and the unconditionality of $(e_n)$. For the second condition in \hyperref[basic:alg]{(II)}, given $\lambda=(a,b)\in\Lambda_k$, for some $k=1,...,q$, and $m\in[m',m'']$, we have
\begin{align*}
\sum_{j\neq k} B_{w(a)}^{m_k}&\left(F_{w^{-1/m'}(a_j)}^{m_j}(v(i)^{1/m'})\right)^m\\
&=\sum_{j=k+1}^q B_{w(a)}^{m_k}\left(F_{w^{-1/m'}(a_j)}^{m_j}(v(i)^{1/m'})\right)^m\\
&=\sum_{j=k+1}^q \sum_{l=0}^{N} \big[v_l(i)\big]^{m/m'}\frac{w_{n_j-n_k+l+1}(a)\cdots w_{n_j-(N-l)}(a)}
{\big[w_{l+1}(a_j)\cdots w_{n_j-(N-l)}(a_j)\big]^{m/m'}}e_{n_j-n_k+l}\\
&=\sum_{j=k+1}^q \sum_{l=0}^{N} \big[v_l(i)\big]^{m/m'}y_{j,l}\frac{w_{n_j-n_k+l+1}(a)\cdots w_{n_j-(N-l)}(a)}
{w_{l+1}(a_j)\cdots w_{n_j-(N-l)}(a_j)}e_{n_j-n_k+l},
\end{align*}
where
\[y_{j,l}=\frac{1}{\big[w_{l+1}(a_j)\cdots w_{n_j-(N-l)}(a_j)\big]^{m/m'-1}}.\]
Since $X$ has a continuous norm, from (ii) and as $m/m'-1\geq 0$ we get that $(y_{j,l})$ is bounded uniformly on $j$ and $l$. In order to apply (iii) we define, for $0\leq l\leq N$ and $j\geq k+1$,
\[\alpha_{j,l} = y_{j,l}\frac{w_{n_j-n_k+l+1}(a)\cdots w_{n_j+l}(a)}{w_{n_j-n_k+l+1}(a_k)\cdots w_{n_j+l}(a_k)}\times\frac{w_{n_j+l-N+1}(a_j)\cdots w_{n_j+l}(a_j)}{w_{n_j+l-N+1}(a)\cdots w_{n_j+l}(a)}\]
and use that 
\[w_{n_j-n_k+l+1}(a)\cdots w_{n_j+l}(a)\leq w_{n_j-n_k+l+1}(a_k)\cdots w_{n_j+l}(a_k)\]
in order to verify that $(\alpha_{j,l})$ is bounded uniformly on $j$ and $l$. Now,
\begin{align*}
\sum_{j\neq k} B_{w(a)}^{m_k}&\left(F_{w^{-1/m'}(a_j)}^{m_j}(v(i)^{1/m'})\right)^m\\
&=\sum_{j=k+1}^q \sum_{l=0}^{N} \big[v_l(i)\big]^{m/m'}\alpha_{j,l}\frac{w_{n_j-n_k+l+1}(a_k)\cdots w_{n_j+l}(a_k)}
{w_{l+1}(a_j)\cdots w_{n_j+l}(a_j)}e_{n_j-n_k+l}.
\end{align*}
Hence, \hyperref[basic:alg]{(II)} follows from the unconditionality of $(e_n)$ if $\veps$ is small enough

In order to prove \hyperref[basic:alg]{(III)} and \hyperref[basic:alg]{(IV)}, we take $\lambda=(a,b)\in\Lambda_k$, for some $k=0,...,q$, we fix $m\in[m',m'']$ and we write, for $i=1,2$,
\begin{align*}
    B_{w(a)}^{m_k}\left(F_{w^{-1/m'}(a_k)}^{m_k}(v(i))^{1/m'}\right)^m
    &=\sum_{l=0}^{N}v_l(i)^{m/m'}\frac{w_{l+1}(a)\cdots w_{n_k-(N-l)}(a)}{\big[w_{l+1}(a_k)\cdots w_{n_k-(N-l)}(a_k)\big]^{m/m'}}e_l.
\end{align*}
Condition \hyperref[basic:alg]{(III)} corresponds to the case $m>m'$, for which we write 
\begin{align*}
    \frac{w_{l+1}(a)\cdots w_{n_k-(N-l)}(a)}{\big[w_{l+1}(a_k)\cdots w_{n_k-(N-l)}(a_k)\big]^{m/m'}}&\leq \frac{1}{\big[w_{l+1}(a_k)\cdots w_{n_k-(N-l)}(a_k)\big]^{m/m'-1}}\\
    &=z_{k,l}\frac{1}{\big[w_{l+1}(a_k)\cdots w_{n_k-(N-l)}(a_k)\big]^{1/m'}},
\end{align*}
where \[z_{k,l}=\frac{1}{\big[w_{l+1}(a_k)\cdots w_{n_k-(N-l)}(a_k)\big]^{\frac{m-1}{m'}-1}}.\]
Once again, since $X$ has a continuous norm, from (ii) and as $\frac{m-1}{m'}-1\geq 0$, we get that $(z_{k,l})$ is bounded uniformly on $k$ and $l$. Now we can use the fact that $(e_n)$ is bounded bellow and repeat what we did with (\ref{proof:eqq}), now with $(x_{k,l})$ having only one non-zero term. Hence \hyperref[basic:alg]{(III)} follows if we choose $\veps>0$ small enough.

Finally, condition \hyperref[basic:alg]{(IV)} corresponds to the case $m=m'$, in which we write, for $i=1,2$,
\begin{align*}
    &\left\|B_{w(a)}^{m_k}\left(F_{w^{-1/m'}(a_k)}^{m_k}(v(i))^{1/m'}\right)^{m'}-v(i)\right\|\\
&\quad\quad\quad\leq\sum_{l=0}^N (|v_l(i)+1|)\left\|
\left(\frac{w_{l+1}(a)\cdots w_{n_k-(N-l)}(a)}{w_{l+1}(a_k)\cdots w_{n_k-(N-l)}(a_k)}-1\right)e_l\right\|
\end{align*}
and it is easy to show that this becomes smaller than $\veps$ provided $\tau>0$ is taken very small and using $|f_n(a)-f_n(b)|\leq CF(n) |a-b|.$
\end{proof}

If we restrict the $m$-th power in the statement to $m=1$, we obtain a characterization of families admitting a common hypercyclic \emph{vector}. Hence, what distinguish vectors from algebras as far as this characterization goes is the presence of the power $1/m$ in (ii). As soon as these conditions are equivalent for a specific family of weights, one has that the family admits a common hypercyclic vector if and only if it admits a common hypercyclic algebra. We then get the following result.

\begin{proposition}
Let $d\geq1$, $\alpha\in(0,1]$, $w_1(a)\cdots w_n(a)=\exp(an^\alpha)$ and $X=c_0(\NN)$ or $X=\ell_p(\NN), p\in[1,+\infty)$. The following assertions are equivalent:
\begin{enumerate}[(a)]
    \item $(B_{w(\lambda(1))}\times\cdots\times B_{w(\lambda(d))})_{\lambda\in\Lambda}$ has a common hypercyclic vector in $X^d$;
    \item $(B_{w(\lambda(1))}\times\cdots\times B_{w(\lambda(d))})_{\lambda\in\Lambda}$ has a common hypercyclic algebra in $X^d$.
\end{enumerate}
\end{proposition}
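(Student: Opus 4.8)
The plan is to read off the equivalence from the characterization of Theorem~\ref{thm:carac}. The implication $(b)\Rightarrow(a)$ is trivial, as a common hypercyclic algebra deprived of $0$ consists of common hypercyclic vectors; only $(a)\Rightarrow(b)$ carries content. By Theorem~\ref{thm:carac}, $(b)$ is equivalent to its combinatorial condition $(c)$, and, as remarked right after that theorem, the statement obtained from $(c)$ by keeping only its $m=1$ instance characterizes $(a)$. Since $(c)$ trivially entails its $m=1$ instance, the proposition reduces to showing that, for this family of weights, the $m=1$ instance of $(c)$ already forces $(c)$ to hold for every $m\in\NN$. (We may assume $\Lambda\subset[c_1,c_2]^d$ for some $0<c_1$: passing to a general $\sigma$-compact $\Lambda\subset(0,+\infty)^d$ is routine, since $(a)$ and $(c)$ are phrased through compact subsets of the parameter set and Theorem~\ref{basic:alg} turns the compact-by-compact form of $(c)$ into a common hypercyclic algebra on all of $\Lambda$; and if $\Lambda$ met a zero coordinate the corresponding factor would be the non-hypercyclic unweighted shift, so $(a)$ and $(b)$ would both fail.)

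First I would check that Theorem~\ref{thm:carac} applies, with $I=[c_1,c_2]$ and $F(n)=n^\alpha$: $F$ is non-decreasing since $\alpha>0$; writing $f_n(a)=\sum_{k=1}^n\log w_k(a)=a\,n^\alpha$ one gets $|f_n(a)-f_n(b)|=F(n)\,|a-b|$, so the two-sided Lipschitz condition holds with $c=C=1$; and $w_n(a)=\exp\!\big(a\,(n^\alpha-(n-1)^\alpha)\big)$ is non-decreasing in $a$ with $\inf_{n,a\in I}w_n(a)>0$, while $0<n^\alpha-(n-1)^\alpha\le1$ gives $w_n(a)/w_n(b)\ge\exp(c_1-c_2)>0$ on $I^2$.

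For the key implication, fix $m\in\NN$, a compact $K\subset\Lambda$, $\tau>0$, $N\ge1$ and $\veps>0$, and set $m_K=\min_{1\le i\le d}\min_{x\in K}x_i>0$. The decisive observation is that, since $w_1(a)\cdots w_n(a)=\exp(a\,n^\alpha)$, one has $\big[w_1(a)\cdots w_n(a)\big]^{-1/m}=\exp(-a\,n^\alpha/m)$, so that whenever an enumeration satisfies $n_1\ge N^*$, $n_{k+1}-n_k\ge N^*$ (hence $n_k\ge kN^*$) and the tags obey $\lambda_k(i)\ge m_K$, then for each $i$
\[
\Bigg\|\sum_{k}\frac{1}{\big[w_1(\lambda_k(i))\cdots w_{n_k}(\lambda_k(i))\big]^{1/m}}\,e_{n_k}\Bigg\|
=\Bigg\|\sum_{k}\exp\!\Big(-\tfrac{\lambda_k(i)\,n_k^\alpha}{m}\Big)e_{n_k}\Bigg\|
\le\begin{cases}\Big(\sum_{k\ge1}\exp\!\big(-\tfrac{p\,m_K(N^*)^\alpha}{m}\,k^\alpha\big)\Big)^{1/p},&X=\ell_p,\\ \exp\!\big(-\tfrac{m_K(N^*)^\alpha}{m}\big),&X=c_0,\end{cases}
\]
and both bounds go to $0$ as $N^*\to+\infty$. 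So I would first pick $N^*\ge N$ large enough that this bound is $<\veps$ (one choice serving all $i$), then apply the $m=1$ instance of $(c)$ with the data $\tau$, $N^*$, $\veps$, $K$, obtaining integers $n_1<\dots<n_q$ with $n_1\ge N^*$ and $n_{k+1}-n_k\ge N^*$ and tags $\lambda_1,\dots,\lambda_q\in I^d$ meeting (i), (ii) with $m=1$, and (iii). Then I would discard every $k$ for which the box $\prod_{l=1}^d[\lambda_k(l)-\tau/n_k^\alpha,\lambda_k(l)]$ misses $K$: this preserves (i) and can only decrease the norms occurring in (ii) and (iii) (each being the norm of a sum of nonnegative multiples of distinct basis vectors), while for every surviving $k$ the box meets $K$ and hence $\lambda_k(i)\ge m_K$ for all $i$. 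After the pruning, (i) and (iii) still hold with the prescribed $\tau$ and bound $\veps$, and (ii) for the general power $1/m$ holds by the displayed estimate; thus $(c)$ holds for this $m$, finishing the proof.

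The routine ingredients are the verification of the hypotheses of Theorem~\ref{thm:carac} and the pruning bookkeeping. The one genuinely decisive point — special to weights with $w_1(a)\cdots w_n(a)=\exp(a\,n^\alpha)$ — is that condition (ii) is met for free, \emph{uniformly in the power $1/m$}, as soon as the gaps $n_{k+1}-n_k$ are taken large; this is exactly what collapses the ``algebra'' form of $(c)$ onto its ``vector'' form. The only care needed is to confirm that replacing $N$ by $N^*$ and pruning boxes leave (i) and (iii) intact, which they do.
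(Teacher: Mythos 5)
Your proof is correct and follows essentially the same route as the paper: reduce via Theorem \ref{thm:carac}, and note that for $w_1(a)\cdots w_n(a)=\exp(an^\alpha)$ condition (ii) of (c) holds automatically for every power $1/m$ once $N$ is taken large, so the $m=1$ instance (equivalent to (a)) upgrades to (c) for all $m$. Your additional steps (checking the hypotheses of Theorem \ref{thm:carac}, pruning boxes so the tags stay bounded below) merely make explicit what the paper's much terser argument leaves implicit.
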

\begin{proof}
The proof of $(b)\Rightarrow(a)$ is trivial. Let us assume $(a)$ and prove $(b)$. This family then satisfy (c) of Theorem \ref{thm:carac} for $m=1$. Let $m$ be arbitrary, let $\tau>0$, let $N\geq 1$, $\veps>0$ and let $K\subset \Lambda$ compact. There is $N_0$ such that \begin{equation}\label{eq:cond:equiv}\sum_{n=N_0}^{+\infty}\frac{1}{\exp(an^{\alpha})^{1/m}}<\veps.
\end{equation}
From Theorem \ref{thm:carac}(c, $m=1$) there exist $(n_k,\lambda_k)_{k=1,...,q}$ satisfying $(i)$ and $(iii)$ of the same theorem. Condition $(ii)$ for this arbitrary $m$ follows automatically from (\ref{eq:cond:equiv}), what completes the proof.
\end{proof}

This particular case includes, for example, multiples of the backward shift. In view of the results found in \cite{BCM21}, we obtain the following.

\begin{exa}
Any family $\big(\lambda B\times \mu B\big)_{(\lambda,\mu)\in\Gamma}$ acting on $c_0(\NN)$ or $\ell_p(\NN), p\in[1,+\infty),$ and indexed by a Lipschitz curve $\Gamma\subset (1,+\infty)^2$ admits a common hypercyclic vector. Hence, they admit as well common hypercyclic algebras for the coordinatewise product.
\end{exa}


{\color{red}
}

\section{Convolution product}\label{paperD:sec:conv}

We will obtain Theorem \ref{thm:alglog} by applying the following more general key result from \cite{BCP2}, which is an adapted version of Birkoff's hypercyclic theorem for common hypercyclic algebras.

\begin{proposition}\label{prop:chacriterion}
Let $\Lambda$ be a countable union of compact sets and let $(T_\lambda)_{\lambda\in\Lambda}$ be a family of operators in $\mathcal L(X)$ such that the map  $(\lambda,x)\mapsto T_\lambda(x)$ is continuous from $\Lambda\times X$ into $X$. Assume that, for all compact sets $K\subset \Lambda$,
for all $m\geq1$, for all $U,V$ non-empty open subsets of $X$ and for all neighborhood $W$ of $0$, one can find $u\in U$ such that, for all $\lambda\in K$, 
there exists $N\in\NN$ satisfying
$$
\left\{ 
\begin{array}{l}
 \displaystyle T_\lambda^N(u^n)\in W\textrm{ when }n\leq m-1, \\
 T_\lambda^N(u^{m})\in V.
\end{array}
\right.$$
Then the set of points generating a common hypercyclic algebra for the family $(T_\lambda)_{\lambda\in\Lambda}$ is residual in $X$.
\end{proposition}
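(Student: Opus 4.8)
\textbf{Plan of proof for Proposition \ref{prop:chacriterion}.}
The goal is to establish a Baire-category statement: the set $\mathcal{A}$ of vectors $u\in X$ that generate a common hypercyclic algebra for $(T_\lambda)_{\lambda\in\Lambda}$ is residual. The overall strategy is the standard transference of an approximation hypothesis into a countable intersection of dense open sets, so the plan is to (1) reduce the algebra condition to countably many ``orbital'' requirements indexed by a countable base, (2) write $\mathcal{A}$ (or a residual subset of it) as such a countable intersection, (3) check each piece is open, and (4) use the hypothesis together with a Baire-on-$\Lambda$ argument to check each piece is dense.

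First I would fix a countable basis $(V_s)_{s\geq1}$ of nonempty open sets of $X$ and a countable basis $(W_t)_{t\geq1}$ of neighborhoods of $0$, and write $\Lambda=\bigcup_r K_r$ with each $K_r$ compact. A vector $u$ generates a common hypercyclic algebra exactly when, for every $m\geq1$ and every $s$, and for every $\lambda\in\Lambda$, there is $N$ with $T_\lambda^N(u^m)\in V_s$ while $T_\lambda^N(u^n)$ stays suitably small for $n\leq m-1$; quantifying the ``small'' part over the basis $(W_t)$ and over a finite exhaustion of $m$, one sees that $u\in\mathcal A$ iff $u$ lies in a countable intersection of sets of the form
\[
E(r,m,s,t)=\bigl\{u\in X:\ \forall\lambda\in K_r,\ \exists N,\ T_\lambda^N(u^m)\in V_s\ \text{and}\ T_\lambda^N(u^n)\in W_t\ (n\le m-1)\bigr\}.
\]
So it suffices to prove each $E(r,m,s,t)$ is open and dense; then $\mathcal A$ is residual (being a countable intersection, after also intersecting with the dense $G_\delta$ of ordinary hypercyclic vectors if one wants genuine hypercyclicity of $u$ itself, though here only the algebra conclusion is claimed).

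For \emph{density} of $E(r,m,s,t)$: given a nonempty open $U$, apply the hypothesis with $K=K_r$, this $m$, $U$, $V=V_s$, $W=W_t$ to get $u\in U$ such that for every $\lambda\in K_r$ there is $N=N(\lambda)$ with $T_\lambda^N(u^m)\in V_s$ and $T_\lambda^N(u^n)\in W_t$ for $n\le m-1$. This $u$ is exactly in $E(r,m,s,t)$, so $E(r,m,s,t)\cap U\neq\emptyset$; hence $E(r,m,s,t)$ is dense. For \emph{openness}: here is the one place where compactness of $K_r$ and continuity of $(\lambda,x)\mapsto T_\lambda(x)$ are used. Fix $u\in E(r,m,s,t)$. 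For each $\lambda\in K_r$ choose $N(\lambda)$ as above; by continuity of $x\mapsto T_\lambda^{N(\lambda)}(x^n)$ and of $(\mu,x)\mapsto T_\mu^{N(\lambda)}(x)$, the conditions $T_\mu^{N(\lambda)}(x^m)\in V_s$ and $T_\mu^{N(\lambda)}(x^n)\in W_t$ ($n\le m-1$) hold for all $\mu$ in an open neighborhood $\mathcal O_\lambda$ of $\lambda$ in $\Lambda$ and all $x$ in an open neighborhood $\mathcal U_\lambda$ of $u$ in $X$; note the power maps $x\mapsto x^n$ are continuous since $X$ is a topological algebra. Cover $K_r$ by finitely many $\mathcal O_{\lambda_1},\dots,\mathcal O_{\lambda_k}$ and set $\mathcal U=\bigcap_{i=1}^k\mathcal U_{\lambda_i}$, an open neighborhood of $u$. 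For any $x\in\mathcal U$ and any $\mu\in K_r$, pick $i$ with $\mu\in\mathcal O_{\lambda_i}$; then $N:=N(\lambda_i)$ witnesses $x\in E(r,m,s,t)$. Thus $\mathcal U\subset E(r,m,s,t)$, proving openness.

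Finally I would assemble the conclusion: each $E(r,m,s,t)$ is open and dense, $X$ is a Baire space (Fréchet), so $\bigcap_{r,m,s,t}E(r,m,s,t)$ is a dense $G_\delta$, and by the reduction in step (1) it is contained in the set of generators of a common hypercyclic algebra, which is therefore residual. The only genuinely delicate point is the bookkeeping in step (1)—verifying that the informal ``for all $\lambda$, some $N$ works'' wrapped around a countable family of basic open sets really does characterize (a residual subset of) the algebra generators, and in particular that one may interleave the requirement $T_\lambda^N(u^n)\in W_t$ for the lower powers with the targeting requirement $T_\lambda^N(u^m)\in V_s$ using the \emph{same} $N$; this is exactly the shape the hypothesis is designed to feed, so no trick is needed beyond careful quantifier management. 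The compactness/continuity input for openness is routine once one remembers that $X$ being a topological algebra makes $x\mapsto x^n$ continuous.
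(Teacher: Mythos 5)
The paper does not prove this proposition itself — it is imported as a known result from \cite{BCP2}, so there is no in‑text proof to compare against. Your Baire‑category argument is the standard (and correct) route to such Birkhoff‑type criteria: a countable intersection $\bigcap E(r,m,s,t)$ indexed by compacts $K_r$, a countable base $(V_s)$, a countable base of $0$‑neighbourhoods $(W_t)$ and degrees $m$, with density coming directly from the hypothesis, openness from compactness of $K_r$ together with joint continuity of $(\lambda,x)\mapsto T_\lambda(x)$ and continuity of $x\mapsto x^n$ in a Fréchet algebra, and the inclusion $\bigcap E(r,m,s,t)\subset\{u:\ u\text{ generates a common hypercyclic algebra}\}$ secured by the polynomial decomposition $T_\lambda^N(P(u))=c_m T_\lambda^N(u^m)+\sum_{n<m}c_n T_\lambda^N(u^n)$ for $P(z)=\sum_{n=1}^m c_n z^n$, $c_m\neq 0$, with $s$ and $t$ chosen according to $P$ and the target.
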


As for the covering, we didn't manage to apply Lemma \ref{lem:coveringF} to $w_n(\lambda)=1+\frac{\lambda}{n}$. As this result was made having in mind the case $w_1(a)\cdots w_{n}(a)=\exp(an^\alpha)$, it well fits this setting and naturally grants a common hypercyclic vector. The case $w_n(\lambda)=1+\frac{\lambda}{n}$ is rather different. Now, the function $a\mapsto \sum_{k=1}^n\log\big(w_k(a)\big)$ is $\log(n)$-Lipschitz and a partition made of cubes of side $\frac{\tau}{n_k^\alpha}$ doesn't seem to work well. To overcome this apparent incompatibility we will insist on defining cubes of side $\frac{\tau}{\log(n_k)}$ instead.

In one dimension, we could define something like $\lambda_{j+1}=\lambda_j+\frac{\tau}{\log(n_k)}$ and proceed with similar calculations as in \cite[Theorem 3.12]{BCP2}. The tricky part being to define a suitable sequence $(n_k)_k$, but once it is done, this gives a common hypercyclic algebra for $\big(B_{w(\lambda)}\big)_{\lambda>0}$ and answer \cite[Question 3.16]{BCP2} in the affirmative. What interest us, however, is the possibility of finding such an algebra for $\big(B_{w(\lambda_1)}\times \cdots \times B_{w(\lambda_d)} \big)_{\lambda\in (0,+\infty)^d}$ for any $d\geq 1$. {\color{red} 

}

As we have done before, we will prove the result for $d=2$, but it is clear that the same can be done for any $d\geq 1$.

\begin{proof}[Proof of Theorem \ref{thm:alglog}]
Let $\big(T_{\lambda,\mu}\big)_{(\lambda,\mu)\in (0,+\infty)^2}$ defined by $T_{\lambda,\mu}:=B_{w(\lambda)}\oplus B_{w(\mu)}$ on $X\times X$, where $X=\ell_1(\NN)$ and $w_n(\lambda)=1+\frac{\lambda}{n}$. We aim to apply Proposition \ref{prop:chacriterion} to this family. 

We fix $[a',b']\times[a'',b'']\subset (0,+\infty)^2$ with $b'<2a'$ and $b''<2a''$ and let $m\in \NN$. We also fix an integer $r>\max\{\frac{1}{a'},\frac{1}{a''},1\}$ (as we will see, one could just take $r=1$ in the case $\min\{a',a''\}>1$). As the products $w_1(x)\cdots w_n(x)$ behave like $n^x$, is it enough to prove the claim for $w_1(x)\cdots w_n(x)=n^x$. This will simplify the proof as there will be less constant factors to be dealt with.

For $m=1$ the proposition is equivalent to the existence of a common hypercyclic vector (which is known to be true from \cite{BCM21}). Let us assume $m\geq 2$. Let $U,V,W\subset X\times X$ open and non-empty, with $0\in W$. Let $(u_1,u_2)\in U$, $(v_1,v_2)\in V$, $p\in\NN$ and $\eta>0$ such that $B\big((u_1,u_2);2\eta\big)\subset U$, $B\big((v_1,v_2);3\eta\big)\subset V$ and $\max\text{supp}(u_1,u_2,v_1,v_2)\leq p$. Since for each $n\in\NN$ the function $a\mapsto \sum_{k=1}^n\log(w_k(a))$ is $\log(n)$-Lipschitz and $p$ is fixed, there exists a constant $C_v$ such that, for all $l=0,...,p$ and all $a,b> 0$,
\begin{equation}\label{ineq:lips}
    \Bigg|\sum_{j=l+1}^{n+l}\log\big(w_j(a)\big)-\sum_{j=l+1}^{n+l}\log\big(w_{j}(b)\big)\Bigg|\leq C_v\log(n)|a-b|.
\end{equation}
Let us write $v_i=\sum_{l=0}^pv_{i,l}e_l$ for $i=1,2$ and choose $\sigma$ a very big integer of the form $n^m$ (conditions on its size later). We can choose $\sigma$ such that $\lfloor(\log\sigma)^3+1\rfloor$ is the square of some integer $q$. We then have $\sqrt{q}\in\NN$ and \[(\log\sigma)^3\leq q\leq (\log\sigma)^3+1.\]
We now define $a=\min\{a',a''\}$, $b=\max\{b',b''\}$, and we consider a covering of $[a,b]^2$ by $q$ squares $\Lambda_1,...,\Lambda_q$ of side $\frac{b-a}{\sqrt{q}}$ and centers $(\lambda_1,\mu_1),...,(\lambda_q,\mu_q)$. We choose the sequence of powers \[N_j=(m-1)\sigma+\sigma^{\frac{m-1}{m}}(j+1)^r, \quad j=1, ...,q.\]
Since $\sigma$ is of the form $n^m$, the power $N_j$ is a positive integer for all $j=1,...,q$. Let \[(u',u'')=(u_1,u_2)+\sum_{j=1}^{q}\sum_{l=0}^p\big(d_{l,j}'e_{N_j-(m-1)\sigma+l},d_{l,j}''e_{N_j-(m-1)\sigma+l}\big) + (\veps_1 e_\sigma,\veps_2 e_{\sigma}),\]
where
\[d_{l,j}':=\frac{v_{1,l}}{m\veps_1^{m-1}w_{l+1}(\lambda_j)\cdots w_{l+N_j}(\lambda_j)},\quad d_{l,j}'':=\frac{v_{2,l}}{m\veps_2^{m-1}w_{l+1}(\mu_j)\cdots w_{l+N_j}(\mu_j)},\]
\[\veps_1:=\left(\frac{1}{w_1(a')\cdots w_{m\sigma}(a')}\right)^{\frac{1}{m}},\quad \veps_2:=\left(\frac{1}{w_1(a'')\cdots w_{m\sigma}(a'')}\right)^{\frac{1}{m}}.\]
Let us first show that $(u',u'')\in U$ if $\sigma$ is big enough. We can calculate it coordinate by coordinate and get, for some constant $C_v'>0$ depending on $v$,
\begin{align*}
\|u'-u_1\|_1
    &\leq \left\|\sum_{j=1}^{q}\sum_{l=0}^p d_{l,j}'e_{N_j-(m-1)\sigma+l}\right\|_1 +\|\veps_1 e_{\sigma}\|_1\\
    &\leq \sum_{j=1}^{q}\sum_{l=0}^p \frac{|v_{1,l}|}{m}\times\frac{\big[w_1(a')\cdots w_{m\sigma}(a')\big]^{\frac{m-1}{m}}}{w_{l+1}(\lambda_j)\cdots w_{l+N_j}(\lambda_j)}+\veps_1\\
    &\leq C_v'\sum_{j=1}^{q}\frac{\big(w_1(a')\cdots w_{m\sigma}(a')\big)^{\frac{m-1}{m}}}{w_{1}(a')\cdots w_{N_j}(a')}+\Bigg(\frac{1}{w_1(a')\cdots w_{m\sigma}(a')}\Bigg)^{\frac{1}{m}}\\
    &= C_v' \sum_{j=1}^{q} \left(\frac{(m\sigma)^{\frac{m-1}{m}}}{(m-1)\sigma+\sigma^{\frac{m-1}{m}}(j+1)^r}\right)^{a'}+\Bigg(\frac{1}{w_1(a')\cdots w_{m\sigma}(a')}\Bigg)^{\frac{1}{m}}\\
    &\leq C_v' \sum_{j=1}^{+\infty} \left(\frac{m^{\frac{m-1}{m}}}{(m-1)\sigma^{1/m}+(j+1)^r}\right)^{a'}+\Bigg(\frac{1}{w_1(a')\cdots w_{m\sigma}(a')}\Bigg)^{\frac{1}{m}},
\end{align*}
and this goes to zero as $\sigma\to+\infty$, hence $\|u'-u_1\|< \eta$ if $\sigma$ is big enough. Analogously $\|u''-u_2\|< \eta$, what proves that $(u',u'')\in U$.

Let us now show that, for all $(\lambda,\mu)\in[a',b']\times[a'',b'']$, there is $N$ satisfying
\begin{equation}\label{cond:dim2:t}
\left\{ 
\begin{array}{l}
 \displaystyle T_{\lambda,\mu}^N\big((u',u'')^n\big)\in W\textrm{ for }n<m,\\
 T_{\lambda,\mu}^N\big((u',u'')^m\big)\in V.
\end{array}
\right.
\end{equation}
Given $(\lambda,\mu)\in[a',b']\times[a'',b'']$, let $i\in\{1,...,q\}$ such that $(\lambda,\mu)\in \Lambda_i.$ We then choose $N=N_i$. Let us verify the conditions in (\ref{cond:dim2:t}).

We first notice that, if $n<m$, since $\max\text{supp}(u',u'')^n\leq (m-1)\sigma<N_i=N$ if $\sigma$ is big enough, we have $T_{\lambda,\mu}^N\big((u',u'')^n\big)=0\in W.$ For the $m$-th power we get
\begin{align*}
(u',u'')^m 
    &= (P_0',P_0'')+
    \sum_{j=1}^{q}\sum_{l=0}^p\big(m\veps_1^{m-1}d_{l,j}'e_{N_j+l},m\veps_2^{m-1}d_{l,j}''e_{N_j+l}\big)+(\veps_1^{m}e_{m\sigma},\veps_2^me_{m\sigma}),
\end{align*}
where \[\text{supp}(P_0',P_0'')\subset[0,(m-1)\sigma+p]\cup[0,(m-2)\sigma+2(N_q-(m-1)\sigma+p)]. \]
We aim to apply $T_{\lambda,\mu}^{N_i}$ on $(u',u'')^m$, many parcels will be eliminated by the support. If $\sigma$ is big enough, we'll have $(m-1)\sigma+p<N_i$. Moreover,
\begin{gather*}
    N_i-\big[(m-2)\sigma +2(N_q-(m-1)\sigma+p)\big] \geq N_0-\big[(m-2)\sigma +2(N_q-(m-1)\sigma+p)\big] = \\ = (m-1)\sigma +\sigma^{\frac{m-1}{m}}-(m-2)\sigma -2\big((m-1)\sigma+\sigma^{\frac{m-1}{m}}(q+1)^r-(m-1)\sigma+p\big) = \\ \geq \sigma +\sigma^{\frac{m-1}{m}} -2\Big(\sigma^{\frac{m-1}{m}}\Big(\big[\log(\sigma)\big]^3+2\Big)^r+p\Big) 
    \geq \sigma -2C_r\sigma^{\frac{m-1}{m}}\big[\log(\sigma)\big]^{3r},
\end{gather*}
for some constant $C_r>0$ depending on $r$. Then we will have $N_i>\max\text{supp}(P_0)$ if $\sigma$ is taken big enough. Furthermore, in the sum $\sum_{j=1}^{q}\sum_{l=0}^p\big(m\veps_1^{m-1}d_{l,j}'e_{N_j+l},m\veps_2^{m-1}d_{l,j}''e_{N_j+l}\big)$, we can use that $N_i>N_j+p$ when $j<i$ to conclude that all parcels indexed by $j=1,...,i-1$ have maximum support less than $N_i$. The parcel $j=i$ is the one we will use to approach $(v_1,v_2)$ and the final parcels $j=i+1,...,q$, as well as the separating term $(\veps^m_1e_{m\sigma},\veps^m_2e_{m\sigma})$, will be handled in our next calculations. All in all, we write
\begin{align*}
T_{\lambda,\mu}^{N_i}\big((u',u''&)^m\big)=(P_1',P_1'')+(P_2',P_2'')+(P_3',P_3''),
\end{align*}
where
\begin{align*}
    (P_1',P_1'')&= \sum_{l=0}^p\Big(\frac{w_{l+1}(\lambda)\cdots w_{l+N_i}(\lambda)}{w_{l+1}(\lambda_i)\cdots w_{l+N_i}(\lambda_i)}v_{1,l}e_l,\frac{w_{l+1}(\mu)\cdots w_{l+N_i}(\mu)}{w_{l+1}(\mu_i)\cdots w_{l+N_i}(\mu_i)}v_{2,l}e_l\Big),\\
    P_2'&=\sum_{j=i+1}^q\sum_{l=0}^p\frac{w_{N_j-N_i+l+1}(\lambda)\cdots w_{l+N_j}(\lambda)}{w_{l+1}(\lambda_j)\cdots w_{l+N_j}(\lambda_j)}v_{1,l}e_{N_j-N_i+l},\\
    P_2''&=\sum_{j=i+1}^q\sum_{l=0}^p\frac{w_{N_j-N_i+l+1}(\mu)\cdots w_{l+N_j}(\mu)}{w_{l+1}(\mu_j)\cdots w_{l+N_j}(\mu_j)}v_{2,l}e_{N_j-N_i+l},\\
    P_3'&=\veps_1^mw_{m\sigma-N_i+1}(\lambda)\cdots w_{m\sigma}(\lambda)e_{m\sigma-N_i},\\
    P_3''&=\veps_2^mw_{m\sigma-N_i+1}(\lambda)\cdots w_{m\sigma}(\lambda)e_{m\sigma-N_i}.
\end{align*}
The proof will be finished if we manage to prove the following:
\begin{enumerate}[(I)]
    \item $\|(P_1',P_1'')-(v_1,v_2)\|_1<\eta$;
    \item $\|(P_2',P_2'')\|_1<\eta$;
    \item $\|(P_3',P_3'')\|_1<\eta$.
\end{enumerate}
Let us begin with (I) and show that \[\|(P_1',P_1'')-(v_1,v_2)\|_1\leq \|P_1'-v_1\|_1+\|P_1''-v_2\|_1<\eta.\]
First we notice that, from  (\ref{ineq:lips}), for all $l=0,...,p$,
\begin{align*}
    \Big|\sum_{j=l+1}^{N_i+l}\log\big(w_j(\lambda)\big)-\sum_{j=l+1}^{N_i+l}\log\big(w_{j}(\lambda_i)\big)\Big| &\leq C_v \log(N_i)|\lambda-\lambda_i|\\
    &\leq C_v(b-a)\frac{\log(N_i)}{\sqrt{q}}\\
    &\leq C_v(b-a)\frac{\log(N_i)}{\sqrt{\log(\sigma)^{3}}}\xrightarrow[]{\sigma\to+\infty} 0.
\end{align*}
Then, if $\sigma$ is big enough, we get
\begin{equation}\label{eq:maj}
    \Big|\sum_{j=l+1}^{N_i+l}\log\big(w_j(\lambda)\big)-\sum_{j=l+1}^{N_i+l}\log\big(w_{j}(\lambda_i)\big)\Big| < \min\left(1, \frac{\eta}{4(p+1)\|(v_1,v_2)\|_\infty}\right).
\end{equation}
In particular, the absolute value in the left-hand side is smaller than 1, so we can use the estimate $|\exp(x)-1|\leq2|x|$ which holds for all $x\in[-1,1]$. By using (\ref{eq:maj}) we then obtain
\begin{align*}
    \|P_1'-v_1\|_1 
    &=\left\|\sum_{l=0}^p\Big(\frac{w_{l+1}(\lambda)\cdots w_{l+N_i}(\lambda)}{w_{l+1}(\lambda_i)\cdots w_{l+N_i}(\lambda_i)}-1\Big)v_{1,l}e_l\right\|_1\\
    &\leq \sum_{l=0}^p|v_{1,l}|\Bigg|\exp\Big(\sum_{j=l+1}^{N_i+l}\log\big(w_j(\lambda)\big)-\sum_{j=l+1}^{N_i+l}\log\big(w_{j}(\lambda_i)\big)\Big)-1\Bigg|\\
    &\leq \sum_{l=0}^p|v_{1,l}|\cdot 2\Big|\sum_{j=l+1}^{N_i+l}\log\big(w_j(\lambda)\big)-\sum_{j=l+1}^{N_i+l}\log\big(w_{j}(\lambda_i)\big)\Big|\\
    &< \sum_{l=0}^p|v_{1,l}|\cdot 2\frac{\eta}{4(p+1)\|v_1\|_\infty}\\
    &<\frac{\eta}{2}.
\end{align*}
Analogously we find $\|P_1''-v_2\|_1<\frac{\eta}{2}$, what verifies (I).

For (II), we first notice that, for some constant $C_v''>0$ depending on $v$,
\begin{align*}
\Big\|\sum_{l=0}^p\frac{w_{N_j-N_i+l+1}(\lambda)\cdots w_{l+N_j}(\lambda)}{w_{l+1}(\lambda_j)\cdots w_{l+N_j}(\lambda_j)}v_{1,l}e_{N_j-N_i+l}\Big\|_1
    &\leq\sum_{l=0}^p |v_{1,l}|\frac{w_{N_j-N_i+l+1}(\lambda)\cdots w_{N_j+l}(\lambda)}{w_{l+1}(\lambda_j)\cdots w_{N_j+l}(\lambda_j)} \\
    &\leq C_v''\frac{(N_j+l)^{\lambda-\lambda_j}}{(N_j-N_i+l)^\lambda}.
    \end{align*}
We plan to show that this is the general term of a convergent series on $j$ whose sum can be made small by increasing $\sigma$. As we are dealing with a bi-dimensional partition, it is necessary to keep track of the sign of $|\lambda-\lambda_j|$. If $\lambda_j\geq \lambda$ then, just as in one dimension, we have $(N_j+l)^{\lambda-\lambda_j}\leq 1$, that is, our problem reduces to estimate \[\frac{1}{(N_j-N_i+l)^\lambda},\]
what is included in the calculations below. 
Let us now consider the more difficult case $\lambda_j<\lambda$, which only occurs in two (or more) dimensions. We have
    \begin{align*}
    \frac{(N_j+l)^{\lambda-\lambda_j}}{(N_j-N_i+l)^\lambda}&=\frac{\big((m-1)\sigma+\sigma^{\frac{m-1}{m}}(j+1)^r+l\big)^{\lambda-\lambda_j}}{\Big(\big((j+1)^r-(i+1)^r\big)\sigma^{\frac{m-1}{m}}+l\Big)^\lambda}\\
    &\overset{l\geq 0}{\leq}\frac{\sigma^{\lambda-\lambda_j}(j+1)^{r(\lambda-\lambda_j)}\big(\frac{(m-1)}{(j+1)^r}+\sigma^{\frac{-1}{m}}+\frac{l}{\sigma(j+1)^r}\big)^{\lambda-\lambda_j}}{\sigma^{\lambda\frac{m-1}{m}}(j+1)^{r\lambda}\big(1-\frac{(i+1)^r}{(j+1)^r}\big)^\lambda}\\
    &\overset{r\geq 1}{\leq}\frac{\big(\frac{(m-1)}{(j+1)^r}+\sigma^{\frac{-1}{m}}+\frac{l}{\sigma(j+1)^r}\big)^{\lambda-\lambda_j}}{\sigma^{\lambda_j -\frac{\lambda}{m} }(j+1)^{r\lambda_j}\big(1-\frac{i+1}{j+1}\big)^\lambda}\\
    &\!\!\overset{j\geq i+1}{\leq} \frac{C_0}{\sigma^{\lambda_j -\frac{\lambda}{m} }(j+1)^{r\lambda_j}\big(1-\frac{i+1}{i+2}\big)^\lambda}\\
    &\overset{i\leq q}{\leq} \frac{C_0}{(j+1)^{r\lambda_j}\Big(\frac{\sigma^{\frac{\lambda_j -\frac{\lambda}{m} }{\lambda}}}{q+2}\Big)^\lambda}\\
    &\!\!\!\!\!\!\!\!\!\! \overset{q\leq [\log(\sigma)]^3+1}{\leq}{ \frac{C_0}{(j+1)^{r\lambda_j}\Big(\frac{\sigma^{c'}}{(\log\sigma)^3+3}\Big)^\lambda}},
\end{align*}
for some constant $C_0$ depending on $a,b,r, m, p$ and $c'=\frac{a'}{b'}-\frac{1}{m}\leq \frac{\lambda_j}{\lambda}-\frac{1}{m}$ is positive since $m\geq 2$ and $2a'\geq b'$. These estimates show that 
\begin{align*}
    \|P_2'\|_1&=\left\|\sum_{j=i+1}^q\sum_{l=0}^p\frac{w_{N_j-N_i+l+1}(\lambda)\cdots w_{l+N_j}(\lambda)}{w_{l+1}(\lambda_j)\cdots w_{l+N_j}(\lambda_j)}v_{1,l}e_{N_j-N_i+l}\right\|_1\\
    &\leq \frac{C_v''C_0}{\Big(\frac{\sigma^{c'}}{(\log\sigma)^3+3}\Big)^\lambda}\sum_{j=i+1}^q\frac{1}{(j+1)^{ra'}} \xrightarrow[]{\sigma\to+\infty}0.
\end{align*}
Hence, $\|P_2'\|_1<\frac{\eta}{2}$ if $\sigma$ is big enough. Analogously we get $\|P_2''\|_1< \frac{\eta}{2}$, what shows (II).

Let us proceed to the proof of (III). We have
\begin{align*}
    \|P_3'\|_1&=\|\veps_1^mw_{m\sigma-N_i+1}(\lambda)\cdots w_{m\sigma}(\lambda)e_{m\sigma-N_i}\|_1\\
    &\leq\frac{w_{m\sigma-N_q+1}(b')\cdots w_{m\sigma}(b')}{w_{1}(a')\cdots w_{m\sigma}(a')}\\ 
    &=\frac{w_{1}(b')\cdots w_{m\sigma}(b')}{w_{1}(a')\cdots w_{m\sigma}(a')}\times \frac{1}{w_{1}(b')\cdots w_{m\sigma-N_q}(b')}\\
    &\leq \frac{(m\sigma)^{b'-a'}}{(m\sigma-N_q)^{b'}}\\
    &\leq{ \frac{(m\sigma)^{b'-a'}}{(\sigma-\sigma^{\frac{m-1}{m}}((\log\sigma)^3+2)^r)^{b'}}}\xrightarrow[]{\sigma\to+\infty}0.
\end{align*}
Hence, $\|P_3'\|_1< \frac{\eta}{2}$ if $\sigma$ is big enough. Analogously, $\|P_3''\|_1< \frac{\eta}{2}$ and the proof is done.
\end{proof}

We finish this paper by returning to the one dimensional framework. Let $I$ be an interval and let $\lambda\in I\mapsto \big(w_n(\lambda)\big)_{n}$ be a continuously parametrized family of weights such that $\lambda\mapsto \sum_{k=1}^n \log\big(w_k(\lambda)\big)$ is $F(n)$-Lipschitz for some function $F:\NN\to\NN$. As exemplified by its authors, \cite[Theorem 3.12]{BCP2} can be applied for some cases where $F(n)=n$. It turns out that it can be more generally applied for the case where $w_1(\lambda)\cdots w_n(\lambda)=\exp(\lambda n^\alpha)$ with $\alpha\in (0,1]$, which is a particular case where $F(n)=n^\alpha$. However, the same result doesn't work for the case $F(n)=\log(n)$, although $w_n(\lambda)=1+\frac{\lambda}{n}$ does admit a common hypercyclic algebra as we proved here (through similar, yet different, calculations). 
\begin{question}
Could one obtain a unified one dimensional criterion on $F(n)$ granting a common hypercyclic subalgebra of $\ell_1(\NN)$ for the convolution product and including $F(n)=n^\alpha, \alpha\leq 1,$ and $F(n)=\log(n)$ as particular cases?
\end{question}

\textbf{Acknowledgement.} The author would like to thank the referee for their thoughtful comments and efforts towards improving the manuscript.


\end{document}